\newtheorem{theorem}{Theorem}[section]
\newtheorem{lemma}{Lemma}[section]
\newtheorem{corollary}{Corollary}[section]
\newtheorem{defn}{Definition}[section]
\newtheorem{remark}{Remark}[section]
\newtheorem{claim}{Claim}[section]
\newtheorem{prop}{Proposition}[section]
\newtheorem{conjecture}{Conjecture}[section]
\address{Division of Mathematics, Graduate School of Science,
         Hokkaido University,  Sapporo, 060-0810 Japan}
\thanks{* Partially supported by Grand-in-Aid \# 18684001
 (Japan Society for Promortion of Sciences).} 
\email{matusita@math.sci.hokudai.ac.jp}
\author{Daisuke MATSUSHITA}
\title{On isotropic divisors on irreducible symplectic manifolds}
\begin{document}
\maketitle

\begin{abstract}
Let $ X $ be an irreducible symplectic manifold and $ L $ a divisor on $ X $. Assume that
$ L $ is isotropic with respect to the Beauville-Bogomolov quadratic form.
We define the rational Lagrangian locus and the movable locus 
on the universal deformation space of the pair $ (X,L) $.
We prove that the rational Lagrangian locus is 
empty or coincide with the movable locus of the universal deformation space.
\end{abstract}

\section{Introduction}

We start with recalling the definition of an irreducible symplectic manifold.

\begin{defn}[{{\cite[Th\'{e}or\`{e}m 1]{Beauville}}}]
Let $ X $ be a compact K\"ahler manifold. The manifold $ X $ is said be irreducible symplectic
if $ X $ satisfies the following three properties.
\begin{itemize}
\item[(1)] $ X $ carries a symplectic form.
\item[(2)] $ X $ is simply connected.
\item[(3)] $ \dim H^{0}(X,\Omega_{X}^{2}) = 1$.
\end{itemize}
\end{defn}

Together with  Calabi-Yau manifolds and complex tori, irreducible symplectic manifolds
form a building block of a compact K\"ahler manifold with $ c_{1} = 0 $.
It is shown in \cite{fibrespace}, \cite{addendum_fibrespace} and \cite{MR2453602}
 that a fibre space structure of an irreducible symplectic manifold
is very restricted. To state the result, we recall the definition of a Lagrangian fibration.

\begin{defn}\label{rational_Lag_def}
Let $X$ be an irreducible symplectic  manifold and $L$
a line bundle on $X$.  A surjective morphism
$ g  : X \to S$ is said to be Lagrangian
if a general fibre is connected and Lagrangian. A dominant map
$ g: X \dashrightarrow S $ is said to be rational Lagrangian
if there exists a birational map $\phi : X\dasharrow X'$ such that
the composite map $g\circ \phi^{-1} : X' \to S$
is Lagrangian.
We say that $L$ defines a 
Lagrangian fibration if the linear system $ |L| $ defines a Lagrangian fibration.
Also we  say that $L$ defines a rational Lagrangian fibration if $|L|$ defines a rational Lagrangian fibration.
\end{defn}

\begin{theorem}[\cite{addendum_fibrespace}, \cite{fibrespace} and \cite{MR2453602}]
Let $ X $ be a projective irreducible symplectic manifold. Assume that $ X $ admits a 
surjective morphism $ g : X \to S $ over a smooth projective manifold $ S $. 
Assume that $ 0 < \dim S < \dim X $ and $ g $ has connected fibres.
Then
$ g $ is Lagrangian and $ S \cong \mathbb{P}^{1/2\dim X}$.
\end{theorem}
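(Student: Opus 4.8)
The plan is to split the statement into three layers of increasing depth: first pin down $\dim S$ and the fibre dimension by a purely numerical argument using the Beauville--Bogomolov form $q$ and the Fujiki relation; then prove that a general fibre is Lagrangian by a Hodge-theoretic positivity computation; and finally identify $S$ with $\mathbb{P}^{n}$, where $2n=\dim X$, which is the deepest ingredient. Throughout write $\sigma$ for the symplectic form and recall the Fujiki relation $\int_X\eta^{2n}=c_X\,q(\eta)^n$ with $c_X>0$, together with its full polarization expressing $\int_X\eta_1\cdots\eta_{2n}$ as a sum over pairings of products of $q(\eta_i,\eta_j)$.

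First I would fix an ample class $H$ on $S$ and set $\alpha=g^{*}H$. Since $\alpha$ is pulled back from the $s$-dimensional base ($s=\dim S$) we have $\alpha^{s+1}=0$, while $\alpha^{s}\neq0$ because $g^{*}$ is injective on cohomology; in particular $\alpha^{2n}=0$, so $q(\alpha)=0$. Feeding $q(\alpha)=0$ into the polarized relation, in any top-degree monomial $\alpha^{n+1}\beta_1\cdots\beta_{n-1}$ pigeonhole forces two copies of $\alpha$ into one pair, producing a factor $q(\alpha)=0$; hence $\alpha^{n+1}=0$ and $s\le n$. For the reverse inequality I would evaluate $\int_X\alpha^{n}\omega^{n}$ for a K\"ahler class $\omega$: the only surviving pairings match each $\alpha$ with an $\omega$, giving a nonzero multiple of $q(\alpha,\omega)^{n}$, and $q(\alpha,\omega)>0$ by the reverse Cauchy--Schwarz inequality in the Lorentzian lattice $(N^{1}(X),q)$ applied to the K\"ahler class $\omega$ and the nonzero nef class $\alpha$. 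Thus $\alpha^{n}\neq0$, forcing $s\ge n$, and therefore $\dim S=n$ with general fibre $F$ of dimension $n=\tfrac12\dim X$.

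For the Lagrangian property it then suffices to show $F$ is isotropic, i.e. $\sigma|_F=0$ (the case $n=1$ being immediate, a $2$-form restricting to zero on a curve). Since $\dim S=n$, the class $H^{n}$ is a positive multiple of the point class, so $[F]$ is proportional to $\alpha^{n}$. I would compute $\int_X\alpha^{n}\,\sigma\bar\sigma\,\omega^{\,n-2}$ by the polarized relation: type constraints force $\sigma$ to pair with $\bar\sigma$ (the only nonzero option, as $q(\sigma,\sigma)=q(\sigma,\alpha)=q(\sigma,\omega)=0$), after which pigeonhole again forces an $\alpha$--$\alpha$ pair, so the integral vanishes. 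As $[F]\propto\alpha^{n}$ this yields $\int_F(\sigma|_F)\wedge\overline{(\sigma|_F)}\wedge\omega^{\,n-2}=0$. Now $\sigma|_F$ is a holomorphic, hence primitive, $(2,0)$-form on the compact K\"ahler manifold $F$, so by the Hodge--Riemann bilinear relations this integral is sign-definite and vanishes only if $\sigma|_F=0$. Thus $F$ is isotropic, and being half-dimensional it is Lagrangian, so $g$ is Lagrangian.

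It remains to identify $S$. I would first check $\rho(S)=1$: every $g^{*}D$ satisfies $(g^{*}D)^{n+1}=0$, whence $q(g^{*}D)=0$ by Fujiki, and applying this to $g^{*}D_1+t\,g^{*}D_2$ shows $g^{*}N^{1}(S)$ is totally isotropic; since $q$ has signature $(1,\rho(X)-1)$, a totally isotropic subspace has dimension at most one, so $\rho(S)=1$. Combined with the Lagrangian structure one shows $S$ is Fano, and the identification $S\cong\mathbb{P}^{n}$ is precisely the deep step --- I expect this to be the main obstacle. Here I would invoke the VMRT-based classification of \cite{MR2453602} (building on \cite{fibrespace} and \cite{addendum_fibrespace}), which forces a smooth base of a Lagrangian fibration with these numerical invariants to be projective space, completing the proof.
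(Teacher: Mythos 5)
The paper itself contains no proof of this theorem: it is quoted as background, and the attribution to \cite{fibrespace}, \cite{addendum_fibrespace} and \cite{MR2453602} \emph{is} the paper's entire treatment of it. So the only meaningful comparison is with those references, and your proposal is essentially a faithful reconstruction of them: the Fujiki-relation/pigeonhole computation giving $q(g^{*}H)=0$ and $\dim S=\tfrac12\dim X$, the Hodge--Riemann argument for isotropy of the general fibre, and the total isotropy of $g^{*}N^{1}(S)$ forcing $\rho(S)=1$ are Matsushita's original arguments, while the identification $S\cong\mathbb{P}^{n}$ for a smooth base is precisely Hwang's theorem \cite{MR2453602}, which you correctly isolate as the deep, non-elementary step; invoking it is no more circular than what the paper does.

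Two details in your write-up need tightening. (i) The pigeonhole argument only yields $\int_{X}\alpha^{n+1}\beta_{1}\cdots\beta_{n-1}=0$ for classes $\beta_{i}\in H^{2}(X)$; since the subalgebra of $H^{*}(X)$ generated by $H^{2}$ need not exhaust $H^{2n-2}(X)$, this does not by itself give $\alpha^{n+1}=0$ as a cohomology class. The clean fix is a direct contradiction via the projection formula: if $s\ge n+1$, then $\int_{X}\alpha^{s}\omega^{2n-s}=\left(\int_{S}H^{s}\right)\left(\int_{F}\omega^{2n-s}\right)>0$, whereas the pigeonhole argument forces this integral to vanish; hence $s\le n$. (ii) The claim that ``one shows $S$ is Fano'' is asserted without argument; fortunately it is not needed for your logic, since Hwang's theorem applies to any Lagrangian fibration of a projective irreducible symplectic manifold over a smooth projective base, which is exactly what you have established by that point. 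With these repairs the proposal is sound.
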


It is a natural question when a line bundle $ L $ defines a Lagrangian fibration. If
$ L $ defines a rational Lagrangian fibration, then $ L $ is isotropic with repect to
the Beauville-Bogomolov quadratic form. Moreover the first Chern class $ c_{1}(L) $ of $ L $ 
belongs to the biational K\"ahler cone which is defined in \cite[Definition 4.1]{MR1992275}.

\begin{conjecture}[D.~Huybrechts and J.~Sawon]\label{conjecture}
Let $ X $ be an irreducible symplectic manifold and $ L $  a line bundle on $ X $. 
Assume that $ L $ is
 isotropic with respect to the Beauville-Bogomolov quadratic form on $ H^{2}(X,\mathbb{C}) $.
 We also assume that $ c_{1}(L) $ belongs to the birational K\"ahler cone of $ X $.
Then $ L $ will define a rational Lagrangian fibration.
\end{conjecture}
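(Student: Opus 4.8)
The plan is to attack the conjecture not by constructing a fibration directly on the given $X$, but by a deformation-propagation argument. First I would form the universal deformation space $\mathrm{Def}(X,L)$ of the pair, namely the locus inside the Kuranishi space $\mathrm{Def}(X)$ along which $c_{1}(L)$ remains of type $(1,1)$. Because $X$ is irreducible symplectic, Tian--Todorov unobstructedness makes $\mathrm{Def}(X)$ smooth, the local period map $\mathrm{Def}(X)\to\Omega\subset\mathbb{P}(H^{2}(X,\mathbb{C}))$ with $\Omega=\{[\sigma]:q(\sigma,\sigma)=0,\ q(\sigma,\bar\sigma)>0\}$ is an open embedding, and the condition $[\sigma]\perp c_{1}(L)$ cuts out $\mathrm{Def}(X,L)$ as a smooth hypersurface. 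Parallel transport identifies $H^{2}$ of the nearby fibres, so $L$ deforms to a line bundle $L_{t}$ on each $X_{t}$, and I would record two subsets: the movable locus $\mathcal{M}$ where $c_{1}(L_{t})$ lies in the birational K\"ahler cone, and the rational Lagrangian locus $\mathcal{L}$ where $L_{t}$ defines a rational Lagrangian fibration. The conjecture for $X$ is the statement $0\in\mathcal{L}$, and I would deduce it from the twin assertions that $\mathcal{L}$ is open and closed in $\mathcal{M}$, together with the connectedness of $\mathcal{M}$.

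Next I would prove openness of $\mathcal{L}$ in $\mathcal{M}$. Suppose $L_{t_{0}}$ defines a rational Lagrangian fibration $g_{0}:X'_{t_{0}}\to\mathbb{P}^{n}$ through a birational model $\phi_{t_{0}}:X_{t_{0}}\dashrightarrow X'_{t_{0}}$, so that $L_{t_{0}}$ corresponds to $g_{0}^{*}\mathcal{O}(1)$, which is nef, isotropic and semiample. The heart of the matter is the deformation invariance of $h^{0}(X'_{t},mL_{t})$: for a Lagrangian fibration one has $R^{i}g_{*}\mathcal{O}\cong\Omega^{i}_{\mathbb{P}^{n}}$, whence a Riemann--Roch and base-change computation gives $h^{0}(mL)=\binom{n+m}{n}$ independently of $t$. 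Semiampleness is therefore an open condition along $\mathcal{M}$, the relative linear system $|mL_{t}|$ defines a morphism in a family, and the fibration propagates to a neighbourhood of $t_{0}$ in $\mathcal{M}$.

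For closedness I would take an analytic arc in $\mathcal{M}$ whose punctured part lies in $\mathcal{L}$ and whose centre is $t_{0}\in\mathcal{M}$, and run the fibrations in a family over the punctured arc. Properness of the relative linear system (equivalently of a relative Hilbert scheme) produces a limiting map, and the point is to show that this limit is again a genuine Lagrangian fibration, i.e.\ that the base does not drop dimension and that $L_{t_{0}}$ stays semiample. This is exactly where movability at $t_{0}$ enters: since $c_{1}(L_{t_{0}})$ remains in the birational K\"ahler cone, the limiting class cannot be contracted to something of the wrong numerical dimension, so $t_{0}\in\mathcal{L}$. Combining openness, closedness and the connectedness of $\mathcal{M}$ yields $\mathcal{L}=\mathcal{M}$ or $\mathcal{L}=\emptyset$, the propagation theorem.

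The main obstacle is the base case needed to upgrade propagation into the conjecture itself: the dichotomy only gives the conjecture for $X$ once one knows $\mathcal{L}\neq\emptyset$, that is, that some deformation of the pair actually carries a rational Lagrangian fibration. Establishing this amounts to an abundance statement for isotropic nef classes on hyperk\"ahler manifolds, which is far from formal. I expect the sharpest technical difficulties to be (i) the deformation invariance of $h^{0}(mL)$ in the presence of the birational modifications $\phi_{t}$, since $L$ is only assumed movable rather than nef, and (ii) the control of the limit in the closedness step, where one must rule out an interaction between the walls of the movable cone and the degenerating fibration.
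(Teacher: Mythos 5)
Your proposal does not prove the statement, and the gap is the one you yourself flag in the last paragraph: the open/closed propagation argument only yields the dichotomy that the rational Lagrangian locus $\mathcal{L}$ is empty or all of the movable locus $\mathcal{M}$, and this gives nothing for the original $X$ unless one can exhibit \emph{some} point of $\mathcal{L}$. That base case is not a technicality to be deferred; it is an abundance-type statement for isotropic classes on hyperk\"ahler manifolds, and no purely deformation-theoretic argument will produce it. You should also be aware that the paper does not prove the conjecture either: it is stated there as a conjecture and left open. What the paper actually proves is precisely the propagation result you outline (its Theorem 1.3: the locus $V$ of rational Lagrangian fibrations is empty or equals $\mathrm{Def}(X,L)_{\mathrm{mov}}$), and it then settles the conjecture only for manifolds of $K3^{[n]}$-type and of generalized Kummer type (its Corollary 1.1), by importing the base case from the moduli-theoretic results of Bayer--Macr\`i, Markman and Yoshioka for honest Hilbert schemes of $K3$ surfaces and generalized Kummer varieties, combined with Markman's density theorem (a dense subset of $\mathrm{Def}(X)$ parametrizes such manifolds) and a lemma showing the complement of the movable locus cannot be dense in $\mathrm{Def}(X,L)$. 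So your plan, even if completed in every detail, reproduces the paper's theorem but not the conjecture; to match even the paper's partial resolution you would need to restrict to special deformation types and cite the known cases as input.

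Two further remarks on the propagation argument itself, where your sketch diverges from what the paper does and where it would run into trouble. First, you appeal to connectedness of $\mathcal{M}$ and run openness/closedness relative to $\mathcal{M}$; but $\mathcal{M}$ is not obviously open or connected, and the paper avoids this entirely: it shows that if $V\neq\emptyset$ then the locus $V_{\mathrm{reg}}$ of genuine Lagrangian fibrations is open and dense in all of $\mathrm{Def}(X,L)$, with complement contained in a countable union of hypersurfaces, and then shows separately that every point of $\mathrm{Def}(X,L)_{\mathrm{mov}}$ lying in the closure of $V_{\mathrm{reg}}$ belongs to $V$. Second, your closedness step is where the real difficulty sits, and ``movability prevents the limit from being contracted'' is not an argument. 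The paper's substitute consists of three concrete ingredients: at very general limit points (Picard rank one) the fibre is non-projective, which forces the isotropic class to be nef; Nakayama's theorem on local freeness of direct images, plus cohomology and base change, then gives deformation invariance of $h^{0}$ of all powers; and a numerical criterion (the equality $\nu(L)=\kappa(L)=\tfrac{1}{2}\dim X$ together with Kawamata--Fujino base-point-freeness, using $K_X$ trivial) upgrades nefness to semiampleness. At an arbitrary movable point one first passes to a birational model on which the class is nef, via the $q$-Zariski decomposition, and transfers the deformation family across the birational map by a correspondence of Kuranishi families; this last correspondence is itself nontrivial and occupies Section 2 of the paper. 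Your step (i), deformation invariance of $h^{0}(mL)$ across birational modifications, is handled there exactly by this reduction to a nef model rather than by any direct argument on $X_t$.
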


At that moment, partial results are known about Conjecture \ref{conjecture}. 
We could consult \cite{MR2400885}, \cite{MR2739808},
\cite{MR2357635} and \cite{MR2585581}. In this note,
we consider the above conjecture by a different approach. To state the result,
we recall the basic facts of a deformation of a pair which consists of a symplectic manifold and
a line bundle.

\begin{defn}
 Let $X$ be a K\"{a}hler manifold and $L$ a line bundle on $X$.
 A deformation of the pair $(X,L)$ consists of a smooth
 morphism $\mathfrak{X}\to S$ over a smooth manifold $S$
 with a reference point $o$ and a line
 bundle $\mathfrak{L}$ on $\mathfrak{X}$ 
 such that the fibre $\mathfrak{X}_{o}$ at $o$ is isomorphic to $X$
 and the restriction $\mathfrak{L}|_{\mathfrak{X}_{0}}$ is 
 isomorphic to $L$. 
\end{defn}

If $ X $ is an irreducible symplectic manifold, it is known that there exists the universal 
deformation of deformations of a pair $ (X,L) $.

\begin{prop}[{{\cite[(1.14)]{MR1664696}}}]
Let $X$ be an irreducible symplectic manifold and
$L$ a line bundle on $X$. We also let $\mathfrak{X} \to \mathrm{Def}(X)$
be the Kuranishi family of $X$. Then there exists a smooth hypersurface 
$\mathrm{Def}(X,L)$ of $\mathrm{Def}(X)$ such that the restriction family
$\mathfrak{X}_{L}:= \mathfrak{X}\times_{\mathrm{Def}(X)}\mathrm{Def}(X,L) 
\to \mathrm{Def}(X,L)$ forms the universal family of deformations of the pair
$(X,L)$. Namely, $\mathfrak{X}_{L}$ carries a line bundle $\mathfrak{L}$
and every deformation $\mathfrak{X}_{S} \to S$ of $(X,L)$
is isomorphic to the pull back of $(\mathfrak{X}_{L},\mathfrak{L})$
via a uniquely determined map
$S \to \mathrm{Def}(X,L)$.
\end{prop}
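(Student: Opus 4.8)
The plan is to realize $\mathrm{Def}(X,L)$ as the locus inside $\mathrm{Def}(X)$ over which the cohomology class $c_{1}(L)$ stays of Hodge type $(1,1)$, and to control that locus by the period map. Recall that for an irreducible symplectic manifold the Kuranishi space $\mathrm{Def}(X)$ is smooth (unobstructedness of deformations) and that the local period map $\mathfrak{p} : \mathrm{Def}(X) \to \mathbb{P}(H^{2}(X,\mathbb{C}))$, $t \mapsto [\sigma_{t}] = [H^{2,0}(\mathfrak{X}_{t})]$, is a local isomorphism onto an open subset of the quadric $Q = \{ [\omega] : q(\omega) = 0 \}$ cut out by the Beauville-Bogomolov form $q$ (local Torelli). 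First I would record the standard Hodge-theoretic fact: for a real class $\alpha \in H^{2}(X,\mathbb{R})$, the class $\alpha$ remains of type $(1,1)$ with respect to the complex structure at $t$ if and only if $q(\alpha, \sigma_{t}) = 0$. Applying this to $\alpha = c_{1}(L)$ (which is of type $(1,1)$ at the reference point $o$, so $q(c_{1}(L),\sigma_{o}) = 0$) identifies the desired locus with $\{ t : q(c_{1}(L), \sigma_{t}) = 0 \}$.

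Next I would establish smoothness, assuming $c_{1}(L) \neq 0$. Through the period map the function $t \mapsto q(c_{1}(L), \sigma_{t})$ is, locally, the restriction to $Q$ of the linear form $q(c_{1}(L), -)$, so its vanishing defines a hyperplane section of $Q$. To see that this section is smooth at $[\sigma_{o}]$ I would compute the differential there: under the Kodaira-Spencer identification $T_{o}\mathrm{Def}(X) \cong \mathrm{Hom}(\langle\sigma_{o}\rangle, \sigma_{o}^{\perp}/\langle\sigma_{o}\rangle) \cong H^{1,1}$ it is induced by $\eta \mapsto q(c_{1}(L),\eta)$ on $\sigma_{o}^{\perp}$. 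This descends to $\sigma_{o}^{\perp}/\langle\sigma_{o}\rangle$ because $q(c_{1}(L),\sigma_{o}) = 0$, and it is nonzero: by nondegeneracy of $q$ the restriction $q(c_{1}(L),-)|_{\sigma_{o}^{\perp}}$ vanishes only if $c_{1}(L) \in \langle \sigma_{o}\rangle$, which is impossible since $c_{1}(L)$ is real and nonzero while $\sigma_{o}$ is not proportional to any real class. Hence the differential is surjective and $\mathrm{Def}(X,L)$ is a smooth hypersurface.

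Having fixed the hypersurface, I would construct the line bundle $\mathfrak{L}$. Over each $t \in \mathrm{Def}(X,L)$ the class $c_{1}(L)$ is of type $(1,1)$ and integral, hence algebraic; and since $H^{1}(\mathfrak{X}_{t},\mathcal{O}) = 0$ (simple connectivity forces $H^{1,0} = 0$) there is a unique line bundle $L_{t}$ on $\mathfrak{X}_{t}$ with $c_{1}(L_{t}) = c_{1}(L)$. To promote these fibrewise bundles to a single line bundle on $\mathfrak{X}_{L}$ I would appeal to the deformation theory of the pair: the obstruction to lifting $L$ to first order over a Kodaira-Spencer direction $\kappa$ lies in $H^{2}(X,\mathcal{O}_{X})$ and, under the identification $H^{2}(X,\mathcal{O}_{X}) \cong \overline{H^{2,0}}$, is proportional to $q(c_{1}(L),\kappa)$, which vanishes along $\mathrm{Def}(X,L)$. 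Together with $H^{1}(\mathcal{O}) = 0$ this shows that the relative Picard functor is unobstructed along $\mathrm{Def}(X,L)$, producing $\mathfrak{L}$ with $\mathfrak{L}|_{\mathfrak{X}_{o}} \cong L$.

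Finally, for the universal property I would invoke the universality of the Kuranishi family. Any deformation $\mathfrak{X}_{S}\to S$ of the pair is in particular a deformation of $X$, hence is classified by a unique map $S \to \mathrm{Def}(X)$; since $\mathfrak{X}_{S}$ carries a line bundle restricting to $L$, the class $c_{1}$ stays of type $(1,1)$ on every fibre, forcing this map to factor through $\mathrm{Def}(X,L)$, and the uniqueness of a line bundle with prescribed first Chern class (again $H^{1}(\mathcal{O}) = 0$) identifies the given bundle with the pullback of $\mathfrak{L}$. I expect the genuine technical core to be the third step, namely producing $\mathfrak{L}$ as an honest line bundle on the total space $\mathfrak{X}_{L}$ rather than merely a compatible system on the fibres, together with pinning down the obstruction computation; by comparison the smoothness of the hypersurface is essentially formal once local Torelli and the nondegeneracy of $q$ are in hand.
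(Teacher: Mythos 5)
A preliminary remark: the paper itself gives no proof of this Proposition --- it is quoted directly from \cite[(1.14)]{MR1664696} --- so your proposal can only be measured against the standard argument in the literature, which is essentially what you have reconstructed. Your first two steps (identifying $\mathrm{Def}(X,L)$ with the locus $\{t : q(c_{1}(L),\sigma_{t})=0\}$, then proving smoothness via local Torelli, the nondegeneracy of $q$, and the fact that a nonzero real class cannot lie on the complex line $\langle\sigma_{o}\rangle$) are correct and are exactly how the hypersurface is produced in Huybrechts' proof; your fourth step (universality of the Kuranishi family, which holds here because $H^{0}(X,T_{X})\cong H^{0}(X,\Omega^{1}_{X})=0$, plus uniqueness of a line bundle with prescribed first Chern class since $H^{1}(X,\mathcal{O}_{X})=0$) is likewise the standard conclusion. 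You were also right to insert the hypothesis $c_{1}(L)\neq 0$, which the statement tacitly assumes.

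The genuine gap is in your third step, and it is exactly the step you yourself flag as the technical core. The formula you invoke --- obstruction $=$ contraction of the Kodaira--Spencer class $\kappa$ with $c_{1}(L)$, proportional to $q(c_{1}(L),\kappa)$ --- is a \emph{first-order} statement: it computes the obstruction to lifting $L$ over a first-order thickening of the reference point. Vanishing of these first-order obstructions along $\mathrm{Def}(X,L)$ does not yield "unobstructedness of the relative Picard functor along $\mathrm{Def}(X,L)$": over a higher-order Artinian thickening inside the hypersurface, the obstruction class is no longer computed by pairing with a single Kodaira--Spencer direction, and you have not shown that it vanishes. The standard repair avoids order-by-order lifting altogether. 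Shrink so that $D:=\mathrm{Def}(X,L)$ is Stein and contractible, and consider the exponential sequence on the total space $\mathfrak{X}_{L}$. Since the fibres satisfy $h^{0,1}=0$, Grauert's theorem gives $R^{1}\pi_{*}\mathcal{O}_{\mathfrak{X}_{L}}=0$, and since $D$ is Stein the Leray spectral sequence yields an injection $H^{2}(\mathfrak{X}_{L},\mathcal{O}_{\mathfrak{X}_{L}})\hookrightarrow H^{0}(D,R^{2}\pi_{*}\mathcal{O}_{\mathfrak{X}_{L}})$. The image of $c_{1}(L)\in H^{2}(\mathfrak{X}_{L},\mathbb{Z})\cong H^{2}(X,\mathbb{Z})$ in this space is the section whose value at $t$ is the $(0,2)$-component of $c_{1}(L)$ on $\mathfrak{X}_{t}$, and this vanishes identically precisely by the definition of $D$. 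Hence $c_{1}(L)$ lifts to an honest line bundle $\mathfrak{L}\in\mathrm{Pic}(\mathfrak{X}_{L})$, which restricts to $L$ on the central fibre because $\mathrm{Pic}(X)\hookrightarrow H^{2}(X,\mathbb{Z})$. (Alternatively, one can exploit $h^{0,2}(X)=1$: the locus of $\mathrm{Def}(X)$ over which $L$ extends is formally cut out by a single equation, its tangent space lies in the hyperplane $\ker q(c_{1}(L),-)$ by your first-order computation, so it is a smooth hypersurface contained in $D$, hence equal to $D$ as germs.) With either repair in place of your appeal to "unobstructedness," your argument is complete and coincides with the proof in \cite{MR1664696}.
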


Now  we can state the result.

\begin{theorem}\label{main_result}
Let $X$ be an irreducible symplectic manifold
and $L$ a line bundle on $X$. We also let $\pi : \mathfrak{X}_{L} \to \mathrm{Def}(X,L)$
be the universal family of deformations of the pair $(X,L)$
and $\mathfrak{L}$ the universal bundle. We denote by $q$
the Beauville-Bogomolov form on $H^{2}(X,\mathbb{C})$.
Assume that $q(L)=0$. We define the locus of movable $ \mathrm{Def}(X,L)_{\mathrm{mov}} $ by
\[
 \{
 t \in \mathrm{Def}(X,L);\mbox{$c_{1}(\mathfrak{L}_{t})$ belongs to the birational K\"ahler cone of $ X $.}
 \}
\]
We also define more two subsets of $ \mathrm{Def}(X,L) $. The first is
the
locus of rational Lagrangian fibration $V$ which is defined by
\[
 \{
 t \in \mathrm{Def}(X,L);\mbox{$\mathfrak{L}_{t}$ defines a rational Lagrangian 
 fibration over the projective space. }
 \}
\]
The second is the locus of Lagrangian fibration $V_{\mathrm{reg}}$ which is defined by
\[
 \{
 t \in \mathrm{Def}(X,L);\mbox{$\mathfrak{L}_{t}$ defines a Lagrangian
 fibration over the projective space. }
 \}
\]
Then $V=\emptyset$ or $V=\mathrm{Def}(X,L)_{\mathrm{mov}}$. Moreover if $V\ne \emptyset$, $V_{\mathrm{reg}}$ 
is a dense open subset of $\mathrm{Def}(X,L)$ and
$ \mathrm{Def}(X,L)\setminus V_{\mathrm{reg}} $ is contained in
a union of countably hypersurfaces of $ \mathrm{Def}(X,L) $.
\end{theorem}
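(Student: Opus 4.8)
The plan is to translate everything into Hodge theory via the local Torelli theorem and then argue that a rational Lagrangian fibration, once present at a single point, propagates over a large locus. Writing $\lambda = c_1(L)$, the local Torelli theorem identifies $\mathrm{Def}(X)$ with an open subset of the period domain $Q = \{[\omega] \in \mathbb{P}(H^2(X,\mathbb{C})) : q(\omega,\omega)=0,\ q(\omega,\bar\omega)>0\}$, and under this identification $\mathrm{Def}(X,L)$ becomes the slice $Q \cap \lambda^{\perp}$, since keeping $L$ of type $(1,1)$ is exactly the condition $q(\sigma_t,\lambda)=0$ on the period $\sigma_t$; note $\lambda \in \lambda^{\perp}$ because $q(\lambda)=0$. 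The inclusions $V_{\mathrm{reg}} \subseteq V \subseteq \mathrm{Def}(X,L)_{\mathrm{mov}}$ are immediate: a morphism is a rational map, and a class defining a rational Lagrangian fibration lies in the birational K\"ahler cone by the fact recalled in the introduction. The whole content is therefore, under the assumption $V \neq \emptyset$, to prove the reverse inclusion $\mathrm{Def}(X,L)_{\mathrm{mov}} \subseteq V$ together with the density and countable-hypersurface statements.

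First I would reduce to an honest Lagrangian fibration. Fix $t_0 \in V$, so there is a birational map $\phi : X_{t_0} \dashrightarrow X'$ with $X' \to \mathbb{P}^n$ Lagrangian. By Huybrechts' results on birational symplectic manifolds \cite{MR1992275}, $\phi$ induces a Hodge isometry $H^2(X_{t_0},\mathbb{Z}) \cong H^2(X',\mathbb{Z})$ and an isomorphism of universal deformation spaces compatible with the period maps and carrying $\mathrm{Def}(X_{t_0},L)$ to $\mathrm{Def}(X',L')$, where $L' = \phi_*L = f^*\mathcal{O}(1)$. Because the three loci are defined purely Hodge-theoretically and birationally, this identification respects them, so it suffices to prove the theorem for $(X',L')$; thus I may assume $t_0$ itself carries an honest Lagrangian fibration $f : X \to \mathbb{P}^n$ with $L$ nef, isotropic and semiample.

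Next comes the generic locus. Let $N \subset \mathrm{Def}(X,L)$ be the union over all $\mu \in H^2(X,\mathbb{Z})$ with $\mu \notin \mathbb{Q}\lambda$ of the hypersurfaces $\mathrm{Def}(X,L) \cap \mu^{\perp} = \{t : q(\sigma_t,\mu)=0\}$; this is a countable union of hypersurfaces, and on $U := \mathrm{Def}(X,L) \setminus N$ one has $\mathrm{Pic}(X_t) = \mathbb{Z}\lambda$. On $U$ there are no negative divisor classes, so $L_t$ is nef, and the structure theorem quoted in the introduction gives that the relevant cohomology and intersection numbers ($\dim |L_t| = n$, $L_t^{\,n+1}=0$) are deformation invariant. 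The key step is then to show that base-point-freeness of $|L|$ at $t_0$ deforms, so that $L_t$ stays semiample on $U$ and $|L_t|$ defines a fibration $g_t : X_t \to B_t$ with $0 < \dim B_t < \dim X_t$; applying the structure theorem again forces $g_t$ Lagrangian and $B_t \cong \mathbb{P}^n$. Hence $U \subseteq V_{\mathrm{reg}}$, so $V_{\mathrm{reg}}$ is dense and $\mathrm{Def}(X,L) \setminus V_{\mathrm{reg}} \subseteq N$.

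Finally, for $\mathrm{Def}(X,L)_{\mathrm{mov}} \subseteq V$, take $t$ with $\lambda$ in the birational K\"ahler cone of $X_t$; then $\lambda$ is nef on some birational model $X'_t$, whose deformation space is again identified with $\mathrm{Def}(X,L)$ and whose generic fibre carries a Lagrangian fibration by the previous paragraph. Propagating semiampleness of the isotropic nef class from the generic locus to $X'_t$ shows $\lambda$ defines a Lagrangian fibration on $X'_t$, i.e. a rational one on $X_t$, so $t \in V$. The main obstacle throughout is precisely this semiampleness: nef-ness of $L_t$ is controlled by the N\'eron--Severi lattice and is generic, but promoting it to base-point-freeness is an abundance-type statement, and the crux of the proof is to transport the base-point-freeness present at $t_0$ across the family, using the deformation invariance of the cohomology of $L_t$ and the numerical rigidity of Lagrangian fibrations.
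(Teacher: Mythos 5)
Your outline reproduces the paper's skeleton---reduce to an honest Lagrangian fibration via a birational correspondence of deformation spaces, propagate over the complement of countably many hypersurfaces, then handle movable points by passing to a nef birational model---but it leaves unproved the two steps that carry all the content, and one of them is circular as you have set it up. First, you never address why $V_{\mathrm{reg}}$ is \emph{open}, which is both part of the statement and the first logical ingredient of any propagation argument: the paper proves it at a point $t\in V_{\mathrm{reg}}$ by showing $H^{1}(\mathfrak{X}_{t},\mathfrak{L}_{t})=0$ (via $R^{1}(f_{t})_{*}\mathcal{O}_{\mathfrak{X}_{t}}\cong\Omega^{1}_{\mathbb{P}^{1/2\dim X}}$ and the Leray edge sequence) and then applying cohomology-and-base-change to conclude that $\pi^{*}\pi_{*}\mathfrak{L}\to\mathfrak{L}$ is surjective near $t$. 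Second, the ``crux'' you name---transporting base-point-freeness across the family---is exactly what you do not supply, and the tool you invoke for it, ``deformation invariance of the cohomology of $L_{t}$,'' is not an available fact: $h^{0}$ of a line bundle is only upper semicontinuous and jumps in general. The paper obtains constancy of $h^{0}(\mathfrak{L}_{t}^{\otimes k})$ only along a disk whose punctured part already lies in $V_{\mathrm{reg}}$ (so that the bundle is semiample there) and whose center is nef, via Nakayama's local-freeness theorem (Lemma \ref{nakayama_freeness}); using cohomological invariance to prove semiampleness at generic points, when that invariance itself presupposes semiampleness on a punctured neighbourhood, is a vicious circle. The circle is broken precisely by the openness statement you omitted.

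Even granted constancy of $h^{0}$, you still need the numerical criterion of Lemma \ref{numerical_property}: $L$ nef with $\dim H^{0}(X,L^{\otimes k})=\dim H^{0}(\mathbb{P}^{1/2\dim X},\mathcal{O}(k))$ for all $k$ implies $|L|$ defines a Lagrangian fibration. This is an abundance-type theorem, proved by computing $\nu(L)=\frac{1}{2}\dim X=\kappa(L)$ from Fujiki's relation and then invoking the Kawamata--Fujino base-point-freeness theorem for $K_{X}$ trivial; ``numerical rigidity of Lagrangian fibrations'' is not a substitute for it. Two further gaps: your justification of nefness at Picard-rank-one points (``no negative divisor classes, so $L_{t}$ is nef'') is not a proof---it would apply equally to $-c_{1}(L_{t})$; the paper's Lemma \ref{nefness_of_non_projective} instead argues that a non-nef isotropic class on a non-projective fiber would produce a class of positive Beauville--Bogomolov square and hence projectivity, a contradiction. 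Finally, your claim that the birational identification respects all three loci ``because they are defined purely Hodge-theoretically'' fails for $V_{\mathrm{reg}}$: admitting an honest morphism is neither a Hodge-theoretic nor a birationally invariant condition, and the identification of the two Kuranishi families holds only over dense open subsets whose complements lie in countably many hypersurfaces (Proposition \ref{Birationarity-of-Kuranishi}), not over the whole deformation spaces, which is why the paper must re-prove density and openness intrinsically on each side.
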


\noindent

\begin{remark}
Professors L.~Kamenova and M.~Verbitsky obtained $ V_{\mathrm{reg}} $ is an dense open set of
$ \mathrm{Def}(X,L) $ under the assumption $ V_{\mathrm{reg}} \ne \emptyset $ in \cite[Theorem 3.4]{1208.4626}.
\end{remark}

 To state an application of Theorem \ref{main_result}, we need the following two definitions.
\begin{defn}
Two compact K\"ahler manifolds $ X $ and $ X' $ are 
said to be deformation equivalent if there exists a proper smooth
morphism $\pi : \mathfrak{X} \to S $ over a smooth connected complex manifold $ S $
such that both $ X $ and $ X' $ form fibres of $ \pi $.
\end{defn}

\begin{defn}
An irreducible symplectic manifold $ X $ is said to be of $ K3^{[n]} $-type if 
$ X $ is deformation equivalent to the $ n $-pointed Hilbert scheme of a $ K3 $ surface.
An irreducible symplectic manifold $ X $
is also said to be of type generalized Kummer 
if $ X $ is deformation
equivalent to
a generalized Kummer variety which
is defined in \cite[Th\'{e}or\`{e}m 4]{MR785234}.
\end{defn}

\noindent
It was shown in
 \cite{1301.6584}, \cite{1301.6968} and \cite{1206.4838}
 that if $ X $ is isomorphic to the $ n $-pointed Hilbert scheme of a $ K3 $ surface
 or a generalized Kummer variety, then Conjecture \ref{conjecture} holds.
To combine these results and Theorem \ref{main_result}, we obtain the following result.

\begin{corollary}\label{application}
Let $ X $ be an irreducible symplectic manifold of type $ K3^{[n]} $
or of type generalized Kummer. 
We also let $ L $
 be a line bundle $ L $ on $ X $ which is not trivial,  isotropic
with respect to the Beauville-Bogomolov quadratic form on $ H^{2}(X,\mathbb{C}) $and
$ c_{1}(L) $ belongs to the birational K\"ahler cone of $ X $.
 Then $ L $ define a
rational Lagrangian fibration
over the projective space.
\end{corollary}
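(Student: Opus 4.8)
The plan is to reduce the statement to the nonemptiness of the locus $V$ attached to $(X,L)$ and then to anchor that nonemptiness in the already-known case of the standard model. Since $c_1(L)$ is assumed to lie in the birational K\"ahler cone of $X$, the reference point $o$ of $\mathrm{Def}(X,L)$ corresponding to $(X,L)$ lies in $\mathrm{Def}(X,L)_{\mathrm{mov}}$. Theorem \ref{main_result} then supplies the dichotomy $V=\emptyset$ or $V=\mathrm{Def}(X,L)_{\mathrm{mov}}$; in the second case $o\in V$, which is exactly the assertion that $L$ defines a rational Lagrangian fibration over a projective space. Hence everything comes down to proving $V\neq\emptyset$.

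To prove $V\neq\emptyset$ I would first treat the standard model. By definition of the type there is a K3 surface $S$ with $X$ deformation equivalent to $S^{[n]}$ (resp. to a generalized Kummer variety $X_{0}$), and the results of \cite{1301.6584}, \cite{1301.6968} and \cite{1206.4838} establish Conjecture \ref{conjecture} whenever the manifold is literally such a model. To make this applicable I would fix a parallel-transport isometry of Beauville--Bogomolov lattices along a path realizing the deformation equivalence and let $L_{0}$ be the image of $c_{1}(L)$ on the standard model; since $q$ is invariant under parallel transport, $L_{0}$ is again nonzero and isotropic. Using Markman's description of the monodromy group together with the fact that primitive isotropic vectors of a fixed divisibility form a single monodromy orbit (Eichler's criterion), I would compose with a monodromy operator so that $L_{0}$ lands in the birational K\"ahler cone of the standard model. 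The cited results then give a rational Lagrangian fibration for $(X_{0},L_{0})$, so the analogue of $V$ is nonempty there.

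To transfer this nonemptiness back to $(X,L)$ I would use that membership in $V$ is controlled by the period point together with the parallel-transport class of the line bundle. By the global Torelli theorem for irreducible symplectic manifolds, a period lying in the hyperplane $L^{\perp}$ of the period domain determines the corresponding manifold up to birational equivalence, and being a rational Lagrangian fibration is, by Definition \ref{rational_Lag_def}, unchanged under birational maps. The period maps of $\mathrm{Def}(X,L)$ and of the standard model's universal deformation $\mathrm{Def}(X_{0},L_{0})$ dominate open subsets of the same hyperplane section of the period domain, since the parallel-transport isometry identifies $L^{\perp}$ with $L_{0}^{\perp}$. The dichotomy of Theorem \ref{main_result}, being locally constant along this connected family of $L$-polarized periods, therefore forces $V$ to be nonempty exactly when the standard model's locus is. Combining the three steps yields $V\neq\emptyset$, and with the first step this proves the corollary.

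The main obstacle is this final transfer, that is, the passage from the purely local universal deformation space $\mathrm{Def}(X,L)$ to the global deformation type: one must know that ``defining a rational Lagrangian fibration'' is detected by the period point and the monodromy orbit of $L$, which rests on the global Torelli theorem together with the birational invariance built into the definition. A secondary technical point, arising in the second step, is to certify that the transported isotropic class can be moved into the birational K\"ahler cone of the standard model by a monodromy operator without leaving its isotropic orbit.
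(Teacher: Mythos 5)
Your opening reduction---pass to the dichotomy of Theorem \ref{main_result} and observe that, since $c_{1}(L)$ is movable, everything comes down to $V\neq\emptyset$---is exactly the paper's first move. But the mechanism you propose for proving $V\neq\emptyset$ breaks at the step you yourself flag as the main obstacle, and the break is real. Both $\mathrm{Def}(X,L)$ and $\mathrm{Def}(X_{0},L_{0})$ are germs, so their period images are \emph{small} open subsets of the hyperplane section of the period domain; they need not meet, so nothing is ``dominated'' in common. To propagate ``$V\neq\emptyset$'' along a chain of overlapping charts (or along a connected component of the moduli space of marked pairs, after identifying inseparable points via Huybrechts' birational correspondence), you must produce in every overlap a point of the movable locus, because the dichotomy of Theorem \ref{main_result} only identifies $V$ with $\mathrm{Def}(X,L)_{\mathrm{mov}}$ and says nothing outside it. If the movable locus could fail to be dense, the property ``$V\neq\emptyset$'' could be constant on separate islands and your ``locally constant along the connected family'' claim collapses. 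So even your global route secretly requires the density statement that the paper proves as Lemma \ref{density_of_movable} (the closure of $\mathrm{Def}(X,L)\setminus\mathrm{Def}(X,L)_{\mathrm{mov}}$ is a proper closed subset), whose proof is nontrivial: it uses Boucksom's $q$-Zariski decomposition to show a non-movable isotropic class is big, and then the Riemann--Roch/Fujiki relation $\dim H^{0}(\mathfrak{X}_{t},\mathfrak{L}_{t}^{m})=\chi(\mathcal{O}_{\mathfrak{X}_{t}})$ for $q(\mathfrak{L}_{t})=0$ to contradict bigness. Your proposal contains no substitute for this lemma. A secondary gap: for the generalized Kummer case the monodromy group is a proper (finite-index) subgroup of the stable orthogonal group, so ``Eichler's criterion gives a single monodromy orbit'' does not apply as stated; and landing specifically in the birational K\"ahler cone requires Markman's fundamental-domain theorem for the reflection group generated by prime exceptional divisors, not just orbit transitivity on isotropic vectors.

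For comparison, the paper's actual argument never leaves $\mathrm{Def}(X,L)$ and never invokes global Torelli or monodromy manipulation of $L$. It uses the Markman--Mehrotra density theorem: the locus $\Lambda\subset\mathrm{Def}(X)$ of fibres isomorphic to Hilbert schemes of $K3$ surfaces (resp.\ generalized Kummer varieties) is dense, and its irreducible components are hypersurfaces, so $\Lambda\cap\mathrm{Def}(X,L)$ is dense in $\mathrm{Def}(X,L)$. Combined with Lemma \ref{density_of_movable}, this forces $\Lambda\cap\mathrm{Def}(X,L)_{\mathrm{mov}}\neq\emptyset$; at such a point the cited results of Bayer--Macr\`i, Markman and Yoshioka apply verbatim to the fibre itself, giving $V\neq\emptyset$, and Theorem \ref{main_result} finishes. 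If you want to salvage your global approach, the honest path is to prove Lemma \ref{density_of_movable} first and then run the chart-chaining argument carefully through the non-Hausdorff marked moduli space---at which point you will have reproved the paper's key lemma and added the Torelli machinery on top of it.
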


\section*{Acknowledgement} The author would like to express his gratitude to Professors
Markushevich Dimitri,
Keiji Oguiso and Misha Vervitsky
 for their comments on the earlier version of this work. 
He also would like to express his gratituede to Professors
Arend Bayer, Emanuele Macr\`i, Eyal Markman and Kota Yoshioka for sending me their papers.

\section{Birational correspondence of deformation families}
In this section we study a relationship between 
deformation families. We start with introducing the following Lemma.
 
\begin{lemma}[{{\cite[Lemma 2.6]{MR1664696}}}]\label{isometry}
Let $ X $ and $ X' $ be irreducible symplectic manifolds. Assume that there 
exists a bimeromorphic map $ \phi : X \dasharrow X' $.
Then $ \phi $ induces an isomorphism
\[
 \phi_{*} : H^{2}(X,\mathbb{C}) \cong H^{2}(X',\mathbb{C})
\]
which compatible with the Hodge structures and the Beauville-Bogomolov quadratic forms.
\end{lemma}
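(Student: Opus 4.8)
The plan is to realise $\phi_*$ through a common resolution of $\phi$ and to reduce the statement to a single numerical identity that encodes the Beauville--Bogomolov form. First I would record that $\phi$ is an isomorphism in codimension one. Since $X$ and $X'$ carry everywhere nondegenerate holomorphic two-forms, their canonical bundles are trivial, and a bimeromorphic map between such minimal models can neither contract nor extract a divisor; so there are open subsets $U\subset X$ and $U'\subset X'$ with complements of codimension at least two on which $\phi$ restricts to an isomorphism. Resolving the closure of the graph of $\phi$ inside $X\times X'$ produces a smooth compact K\"ahler manifold $Z$ with proper bimeromorphic morphisms $p:Z\to X$ and $q:Z\to X'$ agreeing with $\phi$ over $U$. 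I would then set $\phi_*:=q_*\circ p^*$. Because $p^*$ and $q_*$ are morphisms of pure weight-two Hodge structures, $\phi_*$ is automatically compatible with the Hodge decompositions; that $\phi_*$ is an isomorphism, with inverse $(\phi^{-1})_*=p_*\circ q^*$, follows from the fact that restriction to the codimension-two complements identifies $H^2(X)$ with $H^2(U)\cong H^2(U')$ with $H^2(X')$.

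Two facts drive the rest. Writing $\sigma,\sigma'$ for the holomorphic symplectic forms and normalising $\sigma':=\phi_*\sigma$, the pullbacks $p^*\sigma$ and $q^*\sigma'$ are holomorphic two-forms on $Z$ that agree on the dense open set $p^{-1}(U)$, hence agree everywhere; thus, in contrast to a general class, the symplectic class receives no exceptional correction, $p^*\sigma=q^*\sigma'$ and $p^*\bar\sigma=q^*\bar\sigma'$, and in particular $\int_X(\sigma\bar\sigma)^n=\int_{X'}(\sigma'\bar\sigma')^n$, so the two normalisations are compatible. Secondly, for any $\alpha\in H^2(X)$ the difference $\delta_\alpha:=p^*\alpha-q^*\phi_*\alpha$ vanishes on $p^{-1}(U)$, hence is supported on the exceptional divisors, so that $p_*\delta_\alpha=0$ and $q_*\delta_\alpha=0$.

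Feeding these into the explicit formula of Beauville \cite{Beauville} for $q$ in terms of $\sigma$ and the cup product, and substituting $q^*\phi_*\alpha=p^*\alpha-\delta_\alpha$ together with $q^*\sigma'=p^*\sigma$, every term linear in $\delta_\alpha$ reduces to an integral $\int_Z\delta_\alpha\,p^*(\,\cdot\,)$ or $\int_Z\delta_\alpha\,q^*(\,\cdot\,)$ and so vanishes because $p_*\delta_\alpha=q_*\delta_\alpha=0$. What survives is a single quadratic defect, giving an identity of the shape
\[
 q_{X'}(\phi_*\alpha)=q_X(\alpha)+\tfrac{n}{2}\,S(\alpha),\qquad
 S(\alpha)=\int_Z \delta_\alpha^{\,2}\,(p^*\sigma\,p^*\bar\sigma)^{n-1}.
\]
The step I expect to demand the most care is the foundational claim that $\phi$ is an isomorphism in codimension one and the attendant equality $p^*\sigma=q^*\sigma'$, since everything downstream rests on the absence of an exceptional correction for the symplectic class. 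To finish, rather than analyse the exceptional fibres locally, I would exploit the symmetry between $\phi$ and $\phi^{-1}$: the same $Z$ computes $(\phi^{-1})_*=p_*q^*$, and applying the displayed identity to $\phi^{-1}$ and the class $\phi_*\alpha$ yields $q_X(\alpha)=q_{X'}(\phi_*\alpha)+\tfrac{n}{2}S(\alpha)$, because the relevant difference is $q^*\phi_*\alpha-p^*\alpha=-\delta_\alpha$ and $q^*(\sigma'\bar\sigma')^{n-1}=p^*(\sigma\bar\sigma)^{n-1}$. Adding the two identities forces $nS(\alpha)=0$, whence $S(\alpha)=0$ and $q_{X'}(\phi_*\alpha)=q_X(\alpha)$, completing the proof that $\phi_*$ is a Hodge isometry.
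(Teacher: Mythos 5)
This lemma is not proved in the paper at all: it is imported verbatim from Huybrechts \cite[Lemma 2.6]{MR1664696}, so the fair comparison is with that cited proof. Your argument has the same skeleton as Huybrechts' --- triviality of the canonical bundles forces $\phi$ to be an isomorphism between open sets $U\subset X$, $U'\subset X'$ with codimension $\ge 2$ complements, whence $H^2(X,\mathbb{Z})\cong H^2(U,\mathbb{Z})\cong H^2(U',\mathbb{Z})\cong H^2(X',\mathbb{Z})$; the symplectic classes satisfy $p^*\sigma=q^*\sigma'$ on a resolution $Z$ of the graph; and the comparison of $q_X$ and $q_{X'}$ is run through Beauville's explicit formula, with the cross terms killed by $p_*\delta_\alpha=q_*\delta_\alpha=0$. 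Where you genuinely depart is the endgame: the cited proof disposes of the exceptional contribution by analysing it directly on the exceptional divisors, whereas you keep the quadratic defect $S(\alpha)=\int_Z\delta_\alpha^2(p^*\sigma\,p^*\bar\sigma)^{n-1}$ as an unknown and eliminate it by symmetry, applying the same identity to $\phi^{-1}$ acting on $\phi_*\alpha$ (the defect class flips sign, its square does not) and adding the two relations to force $nS(\alpha)=0$. This is correct and is in fact cleaner than a termwise analysis, since the individual integrals $\int_Z[E_i][E_j](p^*\sigma\,p^*\bar\sigma)^{n-1}$ do not obviously vanish when an exceptional divisor dominates a $(2n-2)$-dimensional centre; your trick buys complete independence from the local geometry of the exceptional locus, at the cost of needing $(\phi^{-1})_*\phi_*\alpha=\alpha$, which you correctly extract from $p_*\delta_\alpha=0$. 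Two steps deserve the fuller justification you only gesture at, and both are exactly the inputs Huybrechts supplies: first, the codimension-one isomorphism should be proved via discrepancies (on $Z$ one has $K_Z=p^*K_X+E=E$ and $K_Z=q^*K_{X'}+E'=E'$, so $E=E'$ and the divisorial exceptional loci of $p$ and $q$ coincide), which is also what guarantees that every divisorial component of $Z\setminus p^{-1}(U)$ is contracted by \emph{both} projections; second, the claim that a degree-two class vanishing on $p^{-1}(U)$ is a combination of classes of those divisorial components requires the standard purity argument ($H^2(Z,Z\setminus A)$ is generated by the cycle classes of the codimension-one components of $A$), and the equality $p^*\sigma=q^*\sigma'$ as forms needs the Hartogs extension of $(\phi^{-1})^*\sigma$ across the codimension $\ge 2$ complement together with the injectivity of $H^0(X',\Omega^2)\to H^2(X',\mathbb{C})$. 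With those points made explicit, your proof is a complete and valid reconstruction of the cited lemma, and the integral structure plus the matching normalisations $\int_X(\sigma\bar\sigma)^n=\int_{X'}(\sigma'\bar\sigma')^n$ ensure the statement holds for the canonically rescaled Beauville--Bogomolov forms, not merely for the $\sigma$-normalised ones.
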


We consider the relationship of the Kuranishi families 
of bimeromorphic irreducible symplectic manifolds.
\begin{prop}\label{birational_kuranishi}
  Let $X$ and $X'$ are irreducible symplectic manifolds. We denote
by $\pi : \mathfrak{X}\to\mathrm{Def}(X)$ the universal family of  deformations
of $X$. We also denote by $\pi' : \mathfrak{X}' \to \mathrm{Def}(X')$ the universal family of
 deformations of $X'$. Assume that $X$ and $X'$ are bimeromorphic.
Then there exist dense open subset $U$ of $\mathrm{Def}(X)$ 
and $ U' $ of $ \mathrm{Def}(X') $ which satisfy the following three properties.
\begin{itemize}
\item[(1)] The set $ \mathrm{Def}(X)\setminus U $ is contained in 
a union of countably hypersurfaces in $ \mathrm{Def}(X) $ and 
$ \mathrm{Def}(X')\setminus U'$ is also contained in a union
of countably hypersurfaces in $ \mathrm{Def}(X') $.
\item[(2)] They satisfy the following diagram:
\begin{equation*}\label{birational_diagram}
 \xymatrix{
 \mathfrak{X}\times_{\mathrm{Def}(X)}U
  \ar[r]^{\cong}_{\tilde{\phi}} \ar[d] & \mathfrak{X}'\times_{\mathrm{Def}(X')}{U'} 
   \ar[d] \\
 U \ar[r]_{\cong}^{\varphi} & U' ,
 } 
\end{equation*}
where $ \tilde{\phi} $ and $ \varphi $ are isomorphic.
\item[(3)] Let $ s $ be a point of $ U $ and $ s' $ the point $ \varphi (s) $. We also let
$ \phi_{s} : \mathfrak{X}_{s} \cong \mathfrak{X}'_{s'
} $ be 
the restriction of the isomorphism 
$\tilde{\phi} : \mathfrak{X}\times_{\mathrm{Def}(X)} {U} \cong \mathfrak{X}' \times_{\mathrm{Def}(X')} U'$ 
in the above diagram to the fibre $ \mathfrak{X}_{s} $ at $s$
and the fibre $ \mathfrak{X}'_{s'} $ at $ s' $. 
We denote by $ \eta $  a parallel transport in the local system
$ R^{2}\pi_{*} \mathbb{C}$ along a path from the reference point to $ s $.
We also denote by  $ \eta' $ a parallel transport in the local system
$ R^{2}\pi'_{*} \mathbb{C}$ along a path from the reference point to $ s' $.
Then the composition of the isomorphisms
\[	
 H^{2}(X,\mathbb{C}) \stackrel{\eta}{\cong} H^{2}(\mathfrak{X}_{s},\mathbb{C}) 
 \stackrel{\phi_{s}}{\cong} H^{2}(\mathfrak{X}'_{s'}, \mathbb{C})
 \stackrel{\eta'^{-1}}{\cong} H^{2}(X', \mathbb{C})
\]
coincides with $ \phi_{*} $ which is the isomorphism induced by
$ \phi : X \dasharrow X' $.
\end{itemize}
\end{prop}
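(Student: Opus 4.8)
The plan is to build the identification $\varphi$ from the local Torelli theorem together with the Hodge isometry of Lemma \ref{isometry}, then to deform the bimeromorphic map $\phi$ to a relative bimeromorphic map over $\varphi$, and finally to locate the open set $U$ on which this relative map restricts to biregular isomorphisms.

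First I would invoke the local Torelli theorem of Beauville: the period maps $p : \mathrm{Def}(X) \to \Omega_X$ and $p' : \mathrm{Def}(X') \to \Omega_{X'}$ are local isomorphisms onto open neighbourhoods of the reference periods, where $\Omega_X \subset \mathbb{P}(H^2(X,\mathbb{C}))$ denotes the period domain. By Lemma \ref{isometry} the map $\phi$ induces a Hodge isometry $\phi_* : H^2(X,\mathbb{C}) \cong H^2(X',\mathbb{C})$; since it preserves the Hodge decomposition it sends $H^{2,0}(X)$ to $H^{2,0}(X')$, hence carries $p(o)$ to $p'(o')$, and since it preserves $q$ it restricts to an isomorphism $\mathbb{P}(\phi_*) : \Omega_X \cong \Omega_{X'}$. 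After shrinking the Kuranishi representatives I would set
\[
\varphi := (p')^{-1}\circ \mathbb{P}(\phi_*)\circ p : \mathrm{Def}(X) \xrightarrow{\ \cong\ } \mathrm{Def}(X'),
\]
a biholomorphism sending $o$ to $o'$.

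The heart of the argument, and the step I expect to be the main obstacle, is to promote $\phi$ to a relative bimeromorphic map $\tilde\phi : \mathfrak{X} \dashrightarrow \mathfrak{X}'$ over $\varphi$. Because $\phi$ is an isomorphism in codimension one, I would deform its graph, equivalently the fundamental cycles of $\phi$ and $\phi^{-1}$, over $\mathrm{Def}(X)$ using the flatness of the Kuranishi family, obtaining a bimeromorphic map $\phi_s : \mathfrak{X}_s \dashrightarrow \mathfrak{X}'_{\varphi(s)}$ for every $s$. Restricting $\tilde\phi$ fibrewise and applying Lemma \ref{isometry} to each $\phi_s$ produces an isomorphism of local systems $R^2\pi_*\mathbb{C} \cong \varphi^* R^2\pi'_*\mathbb{C}$; being a morphism of locally constant sheaves it is flat and therefore commutes with parallel transport, so its stalk at any $s$ is the parallel transport of its stalk at $o$. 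As the stalk at the reference point is $\phi_*$ by construction, the composite $\eta'^{-1}\circ (\phi_s)_* \circ \eta$ equals $\phi_*$ for every $s$, which is precisely assertion (3).

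It remains to identify $U$ and to upgrade $\phi_s$ to a biregular isomorphism there, which yields assertions (1) and (2). A bimeromorphic map between irreducible symplectic manifolds fails to be biregular only in the presence of an algebraic class governing a divisorial contraction or a flop; each relevant integral class $\alpha \in H^2(X,\mathbb{Z})$ stays of Hodge type $(1,1)$ exactly along the hypersurface $\{ s : q(\alpha, p(s)) = 0 \}$ of $\mathrm{Def}(X)$. Letting $U$ be the complement of the union of these hypersurfaces over the countably many such classes, the fibre $\mathfrak{X}_s$ for $s \in U$ carries no such class, so $\phi_s$ is an isomorphism; by construction $\mathrm{Def}(X)\setminus U$ lies in a countable union of hypersurfaces, which is assertion (1). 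Setting $U' := \varphi(U)$ and gluing the $\phi_s$ yields the family isomorphism $\tilde\phi$ of assertion (2). The delicate points I would have to watch, beyond the deformation of the bimeromorphic map itself, are verifying that the deformed cycles define a genuine bimeromorphic map over $\varphi$ rather than a mere correspondence, and confirming that the wall classes are countable and that each cuts out an honest hypersurface under the local isomorphism $p$.
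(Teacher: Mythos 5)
Your outline gets the cheap parts of the statement right (the construction of $\varphi$ from local Torelli and the Hodge isometry of Lemma \ref{isometry}, the locally-constant-sheaf argument for assertion (3), the countable wall-and-chamber description of the complement of $U$), but the step you yourself call the heart of the argument --- promoting $\phi$ to a relative bimeromorphic map $\tilde\phi : \mathfrak{X} \dashrightarrow \mathfrak{X}'$ over $\varphi$ --- is exactly where all the content of the proposition lies, and the justification you offer does not establish it. Deforming the closure of the graph of $\phi$ as a cycle in $\mathfrak{X}\times_{\varphi}\mathfrak{X}'$ is obstructed in general: ``flatness of the Kuranishi family'' is not an argument, since there is no unobstructedness theorem for cycles, and even the necessary Hodge-theoretic condition concerns not only the $H^{2}\otimes H^{2}$ K\"unneth component of the class of the graph (which your choice of $\varphi$ does keep of Hodge type) but all of its higher K\"unneth components, which you never address. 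Moreover, even granting a deformation of the cycle, it would a priori be only a correspondence rather than the graph of a bimeromorphic map; you flag this yourself as a ``delicate point'' but leave it unresolved, and it is not a technicality. The results actually available in the literature go in the opposite direction: one takes \emph{limits} of graphs of isomorphisms (using properness of the Barlet cycle space) to produce a bimeromorphic map on a special fibre, not deformations of a given graph outward from the central fibre.

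The paper avoids this problem entirely. It invokes Huybrechts' theorem (\cite[Theorem 4.6]{MR1664696}) that bimeromorphic irreducible symplectic manifolds give non-separated points in moduli: there exist deformations $\mathfrak{X}_{S}\to S$ and $\mathfrak{X}'_{S}\to S$ over a disk which are isomorphic over $S\setminus 0$. By universality of the Kuranishi families this produces two points $t$, $t'$ with $\mathfrak{X}_{t}\cong\mathfrak{X}'_{t'}$; Beauville's Th\'eor\`eme 5~(b) then extends this single isomorphism to an isomorphism of the families over open sets $U$, $U'$, which is assertion (2). Assertion (3) is verified by tracing through Huybrechts' construction in \cite[Proposition 4.5]{MR1664696} (the family $\mathfrak{X}'_{S}$ is the closure of the image of $\mathfrak{X}_{S}\dashrightarrow\mathbb{P}((\pi_{S})_{*}\mathfrak{H})$, and the resulting relative birational map restricts to $\phi$ on the central fibre). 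Assertion (1) is proved by applying \cite[Theorem 4.3]{MR1664696} to points $s\in\bar U$ to get that $\mathfrak{X}_{s}$ and $\mathfrak{X}'_{\varphi(s)}$ are at least bimeromorphic, and then observing that when $\dim H^{1,1}(\mathfrak{X}_{s},\mathbb{Q})=0$ the fibres carry neither curves nor effective divisors, so the bimeromorphic map is biregular by \cite[Proposition 2.1]{MR1992275} and $s\in U$. Your final wall-and-chamber step is essentially this last argument, but it has nothing to act on unless fibrewise bimeromorphic maps are already known to exist --- which is precisely what your graph-deformation step was supposed to provide and does not. To repair your proof you would either have to prove that graph-deformation statement, which is in effect a strengthening of Huybrechts' theorem, or restructure the argument along the paper's lines.
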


\begin{proof}
  The proof of this proposition is a mimic of the proof of 
  \cite[Theorem 5.9]{MR1664696}.
  The proof consists of two steps. First, we show that there exist
  open sets $ U $ of $ \mathrm{Def}(X) $ and $ U' $ of $ \mathrm{Def}(X') $
  which satisfy the the assertions (2) and (3) of Proposition \ref{birational_kuranishi}.
Since $ X $ and $ X' $ are bimeromorphic,
we have a deformation $\mathfrak{X}_{S}\to S$
of $X$ and a deformation $\mathfrak{X}'_{S}\to S$ of $X'$ over
a small disk $S$ which are
 isomorphic to each other over the
punctured disk $S\setminus0$ by \cite[Theorem 4.6]{MR1664696}.
By the universality, 
 $ \mathfrak{X}_{S} \to S$ is isomorphic 
to the base change $ \mathfrak{X} \to \mathrm{Def}(X) $ 
by a uniquely determined morphism
$ S \to \mathrm{Def}(X) $. The family $ \mathfrak{X}'_{S} \to S $ is also
isomorphic to the base change of 
$ \mathfrak{X}' \to \mathrm{Def}(X') $
by a uniquely determined morphism
$ S \to \mathrm{Def}(X') $. 
Thus
 there exist  points $t \in\mathrm{Def}(X)$ and 
 $ t'  \in \mathrm{Def}(X') $
such that the fibres $\mathfrak{X}_{t}$ and $\mathfrak{X}'_{t'}$
are isomorphic. 
Let $ \eta $ be a parallel transportation of $ R^{2}\pi_{*}\mathbb{C} $ along
a path from the reference point to $ t $ and
 $ \eta' $  a parallel transportation of $ R^{2}\pi'_{*}\mathbb{C} $ along
a path from the reference point to $ t' $.
To consider 
the composition of the isomorphisms
\begin{equation}\label{parallel_transport}
 H^{2}(X,\mathbb{C}) \stackrel{\eta}{\cong}
 H^{2}(\mathfrak{X}_{t},\mathbb{C}) 
 \cong H^{2}(\mathfrak{X}'_{t'},\mathbb{C}) 
 \stackrel{\eta'^{-1}}{\cong}
  H^{2}(X',\mathbb{C})
\end{equation}
we need more information of the construction of the two families $ \mathfrak{X}_{S} \to S $ and
$ \mathfrak{X}'_{S} \to S $. By the last paragraph of the proof of \cite[Theorem 4.6]{MR1664696},
the construction is due to \cite[Proposition 4.5]{MR1664696}. 
According to  \cite[Proposition 4.5]{MR1664696}, $ \mathfrak{X}'_{S} \to S $ is constructed as follows.
Let $ H' $ be an ample divisor on $ X' $ and $ H := (\phi_{*})^{-1}H'$. We consider a deformation 
$\pi_{S} :  (\mathfrak{X}_{S},\mathfrak{H}) \to S $ of $ (X,H) $. Then the closure of the image of the rational
map $ \mathfrak{X}_{S} \dasharrow \mathbb{P}((\pi_{S})_{*}\mathfrak{H}) $ gives the desired deformation
$ \mathfrak{X}'_{S} \to S $. 
Hence we have a birational map $ \mathfrak{X}_{S} \dasharrow \mathfrak{X}'_{S} $ which commutes with
the two projections. Moreover the restriction of this birational map
coincides with $ \phi $. Thus the composition of the isomorphisms (\ref{parallel_transport})
coincides with $ \phi_{*} $.
By \cite[Th\'{e}or\`{e}m 5 (b)]{Beauville}, we can extend the isomorphism
$ \mathfrak{X}_{t} \cong \mathfrak{X}'_{t'} $ over open sets of
$ \mathrm{Def}(X) $ and $ \mathrm{Def}(X') $, that is,
there exist open
sets $U$ of $\mathrm{Def}(X)$ and $ U' $ of $ \mathrm{Def}(X') $
such that the restriction families 
$\mathfrak{X} \times_{\mathrm{Def}(X)}U$
and $\mathfrak{X}'\times_{\mathrm{Def}(X')}U'$ are isomorphic and this isomorphism is compatible
with the two projections 
$\mathfrak{X}\to\mathrm{Def}(X)$ and $\mathfrak{X}'\to\mathrm{Def}(X')$.
By this construction, the restriction of the
isomorphism $ \tilde{\phi} : \mathfrak{X}\times_{\mathrm{Def}(X)}U \cong \mathfrak{X}'\times_{\mathrm{Def}(X')}U' $
to the fibres
satisfies the assertion (3) of Proposition \ref{birational_kuranishi}.

Next we show that $ U $ and $ U' $ satisfies the assertion (1) of 
Proposition \ref{birational_kuranishi}. 
Let $s$ be a point of $\bar{U}$. By \cite[Theorem 4.3]{MR1664696},
the fibres $\mathfrak{X}_{s}$ and $\mathfrak{X}'_{s}$ are bimeromorphic.
If $\dim H^{1,1}(\mathfrak{X}_{s},\mathbb{Q})=0$, 
then $\mathfrak{X}_{s}$ and $\mathfrak{X}'_{s}$
carries neither curves nor effective divisors. Thus $ \mathfrak{X}_{s} $ and $ \mathfrak{X}'_{s} $
are isomorphic by \cite[Proposition 2.1]{MR1992275} and  $ s \in U $.
Thus if $s \in \bar{U}\setminus U$ 
then $  \dim H^{1,1}(\mathfrak{X}_{s},\mathbb{C}) \ge 1$. This implies
that $ \bar{U}\setminus U $ is contained in a union of
countably hypersurfaces.
\end{proof}

For the proof of Theorem \ref{main_result},
we need also a correspondence of deformation families of pairs.  
Before we state the assertion,
we give a proof of
 the following Lemma.

\begin{lemma}\label{Picard_number_one}	
Let $X$ and $X'$ are irreducible symplectic manifold. Assume that
there exists a bimeromophic map $\phi : X \dasharrow X'$. We
also assume
$\dim H^{1,1}(X,\mathbb{Q})=1$ and
$q(\beta) \ge  0$ for every element $ \beta $ of $H^{1,1}(X,\mathbb{Q})$,
where $ q_{X} $ is the Beauville-Bogomolov quadratic form
on $ H^{2}(X,\mathbb{C}) $.
Then $X$ and $X'$ are isomorphic.
\end{lemma}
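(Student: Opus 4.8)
The plan is to reduce the statement to Huybrechts's description of the Kähler and birational Kähler cones, and to show that under the Picard-number-one hypothesis with $q\ge 0$ there is simply no room for a nontrivial birational modification. First I would apply Lemma \ref{isometry} to $\phi$ to obtain the Hodge isometry $\phi_{*}:H^{2}(X,\mathbb{C})\cong H^{2}(X',\mathbb{C})$. Since $\phi_{*}$ is defined over $\mathbb{Q}$ and respects the Hodge decomposition, it restricts to an isomorphism $H^{1,1}(X,\mathbb{Q})\cong H^{1,1}(X',\mathbb{Q})$; hence $\dim H^{1,1}(X',\mathbb{Q})=1$ as well, and because $\phi_{*}$ preserves $q$ the positivity hypothesis $q(\beta)\ge 0$ transfers to every class in $H^{1,1}(X',\mathbb{Q})$. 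I write $H^{1,1}(X,\mathbb{Q})=\mathbb{Q}\alpha$ and $H^{1,1}(X',\mathbb{Q})=\mathbb{Q}\alpha'$, normalized so that $\phi_{*}\alpha$ generates $H^{1,1}(X',\mathbb{Q})$. I would also record the standard facts that $\phi$ is an isomorphism in codimension one (both $X$ and $X'$ are $K$-trivial) and that the induced isometry carries the positive cone $\mathcal{C}_{X}$ onto $\mathcal{C}_{X'}$, so that $\phi_{*}$ sends open subcones of $\mathcal{C}_{X}$ to open subcones of $\mathcal{C}_{X'}$.

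Next I would invoke the structure of the cones from \cite[Definition 4.1]{MR1992275} and the surrounding results: the Kähler cone $\mathcal{K}_{X}$ is a chamber of the birational Kähler cone $\mathcal{BK}_{X}$, and the walls separating the chambers inside the positive cone are the hyperplanes $\delta^{\perp}$ associated to classes $\delta$ of rational curves. Such a wall meets the interior of $\mathcal{C}_{X}$ only when $q(\delta)<0$, since $q$ is Lorentzian on $H^{1,1}(X,\mathbb{R})$ and the orthogonal complement of a class of nonnegative square does not cross the positive cone. A curve class is algebraic, hence of type $(1,1)$ and rational, so it lies in $H^{1,1}(X,\mathbb{Q})=\mathbb{Q}\alpha$, and every candidate wall is therefore $\alpha^{\perp}$. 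But $q(\alpha)\ge 0$ by hypothesis, so $\alpha^{\perp}$ does not cross the interior of $\mathcal{C}_{X}$: when $q(\alpha)>0$ it misses the cone entirely, and when $q(\alpha)=0$ it is tangent to $\partial\mathcal{C}_{X}$ along $\mathbb{R}\alpha$. Consequently $\mathcal{C}_{X}$ contains no wall, so $\mathcal{K}_{X}=\mathcal{BK}_{X}$, and by the identical argument $\mathcal{K}_{X'}=\mathcal{BK}_{X'}$.

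Finally, because $X$ and $X'$ are bimeromorphic they have the same birational models, so the birational Kähler cone is carried across by $\phi_{*}$, that is $\phi_{*}(\mathcal{BK}_{X})=\mathcal{BK}_{X'}$. Combining this with the previous step yields $\phi_{*}(\mathcal{K}_{X})=\mathcal{K}_{X'}$, so the Hodge isometry induced by $\phi$ matches the two Kähler cones; the criterion that a bimeromorphic map whose induced isometry identifies the Kähler cones is in fact an isomorphism then upgrades $\phi$ to a biholomorphism. This is the same mechanism already used for the case $\dim H^{1,1}=0$ through \cite[Proposition 2.1]{MR1992275}, now applied one Picard number higher. The delicate point, and the step I would write out most carefully, is the claim that every wall of $\mathcal{K}_{X}$ inside $\mathcal{C}_{X}$ is cut out by an algebraic class of negative square: once this is granted, the hypotheses $\dim H^{1,1}(X,\mathbb{Q})=1$ and $q(\alpha)\ge 0$ leave no wall at all, and the borderline non-projective case $q(\alpha)=0$ is exactly the one in which the only possible curve classes are isotropic and hence cannot support a flop.
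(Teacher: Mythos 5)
Your proposal is correct in its overall mechanism and shares the paper's skeleton: transfer the hypotheses through the Hodge isometry $\phi_{*}$ of Lemma \ref{isometry}, show that $\phi_{*}$ matches Kähler classes with Kähler classes, and then invoke Fujiki's extension criterion (\cite[Corollary 3.3]{MR642659}) to upgrade $\phi$ to a biholomorphism. The difference lies in the middle step, and it is a genuine difference in route. The paper gets ``Kähler goes to Kähler'' in one stroke: by \cite[Corollary 7.2]{MR1664696}, the hypothesis that $q\ge 0$ on the one-dimensional $H^{1,1}(\cdot,\mathbb{Q})$ forces the Kähler cone to equal the whole positive cone on both $X$ and $X'$, and $\phi_{*}$ carries positive cone to positive cone. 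You instead route through the birational Kähler cone: you argue that the walls of the chamber structure are orthogonals $\delta^{\perp}$ of rational curve classes with $q(\delta)<0$, that no such class exists here, hence $\mathcal{K}=\mathcal{BK}$ on both sides, and then use birational invariance $\phi_{*}(\mathcal{BK}_{X})=\mathcal{BK}_{X'}$. This is morally the same vanishing-of-walls phenomenon, but be aware that the wall-and-chamber description you lean on (walls cut out by rational curve classes of negative square, chambers indexed by birational models) is \emph{stronger} than what is available in the paper's cited reference \cite{MR1992275}, which only describes the closure of $\mathcal{BK}_{X}$ via uniruled divisors; the full statement is later technology (Markman, Amerik--Verbitsky), or must be assembled from the rational-curve description of the Kähler cone in Huybrechts' erratum. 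What your route buys is a transparent geometric picture (no negative class, so no flop can exist) and the convenience that birational invariance of $\mathcal{BK}$ substitutes for the positive-cone preservation argument; what the paper's route buys is economy, since \cite[Corollary 7.2]{MR1664696} gives $\mathcal{K}=\mathcal{C}$ directly from exactly the stated hypotheses, with no chamber structure needed. If you rewrite your argument citing Corollary 7.2 instead of the wall description, it collapses to the paper's proof.
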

\begin{proof}
Since $ X $ and $ X' $ are bimeromorphic, we have an isomorphism
\[
\phi_{*} : H^{2}(X,\mathbb{C})  \cong H^{2}(X' , \mathbb{C})
\]
by Lemma \ref{isometry}.
Since $ \phi_{*} $ respects the Beauville-Bogomolov quadratics
and the Hodge structures, $ \dim H^{1,1}(X', \mathbb{Q}) = 1$ and
$ H^{1,1} (X',\mathbb{Q})$ is generated by a class $ \gamma \in H^{1,1}(X',\mathbb{Q}) $
such that $ q_{X'}(\gamma) \ge 0$, where $ q_{X'} $ 
is the Beauville-Bogomolov quadratic form on $ H^{2}(X',\mathbb{C}) $.
Let $ \mathcal{C}_{X} $ and $ \mathcal{C}_{X'} $ be the positive cones in $ H^{1,1}(X,\mathbb{R}) $
and $ H^{1,1}(X',\mathbb{R}) $, respectively.
By \cite[Corollary 7.2]{MR1664696}, $ \mathcal{C}_{X} $ and $ \mathcal{C}_{X'} $
coincide with the K\"ahler cones of $ X $ and $ X' $, respectively.
Since $ \phi_{*} $ maps $ \mathcal{C}_{X} $ to $ \mathcal{C}_{X'} $,
$ \phi_{*}\alpha $ is K\"ahler for all K\"ahler class of $ H^{1,1}(X,\mathbb{R}) $.
By \cite[Corollary 3.3]{MR642659},
$ \phi $ can be extended to an isomorphism.
\end{proof}

Now
we can state a correspondence of deformation families of pairs.

\begin{prop}\label{Birationarity-of-Kuranishi}
  Let $X$ are irreducible symplectic manifolds and $L$ a line bundle on $ X $.
  We also let $X'$ are irreducible symplectic manifold and
$L'$ a line bundle on $X'$. 
We denote the universal
family of  deformations of the pair 
$(X,L)$ by $(\mathfrak{X}_{L},\mathfrak{L})$
and the
parametrizing space
by $\mathrm{Def}(X,L)$.
We also denote by 
the universal family of deformations of the pair $(X',L')$ by 
$(\mathfrak{X}'_{L'},\mathfrak{L}')$ and
the parameter space 
by 
$ \mathrm{Def}(X',L') $.
Assume that there exists 
a birational map $\phi : X\dasharrow X'$ 
such that  $\phi_* L \cong L'$ and $ q_{X}(L) \ge 0 $, where
$ q_{X} $ is the Beauville-Bogomolov quadratic form
on $ H^{2}(X,\mathbb{C}) $. Then we have the followings.
\begin{itemize}
\item[(1)] There exist open subsets $U_{L}$ of $\mathrm{Def}(X,L)$ 
and $ U'_{L'} $ of $ \mathrm{Def}(X',L') $ such that 
they satisfy the following diagram
\begin{equation*}\label{birational_diagram_2}
 \xymatrix{
 \mathfrak{X}_{L}\times_{\mathrm{Def}(X,L)}U_{L} 
 \ar[r]^{\cong} \ar[d] & 
 \mathfrak{X}'_{L'}\times_{\mathrm{Def}(X',L')}U'_{L'}  \ar[d] \\
 U_{L} \ar[r]_{\cong}^{\varphi} & U'_{L'} },
\end{equation*}
where $\varphi $ is the isomorphism in the diagram of the assertion (2)
of Proposition \ref{birational_kuranishi}.
Moreover $ \mathrm{Def}(X,L)\setminus U_{L} $ is contained in a union of countably 
 hypersurfaces of $ \mathrm{Def}(X,L) $ and
$ \mathrm{Def}(X,L)\setminus U'_{L'} $ is also contained in a union of countably
 hypersurfaces of $ \mathrm{Def}(X,L) $.
\item[(2)] For every point $ s\in U_{L} $, 
$ (\phi_{s})_{*}\mathfrak{L}_{s} \cong  \mathfrak{L}'_{s'}$, where 
$ s'  = \varphi (s)$ and
$ \phi_{s} $ is the restriction of the isomorphism 
$
\mathfrak{X}_{L}\times_{\mathrm{Def}(X,L)}U_{L} 
 \to
 \mathfrak{X}'_{L'}\times_{\mathrm{Def}(X',L')}U'_{L'} 
$
to the fibres $ \mathfrak{X}_{L,s} $ and $ \mathfrak{X'}_{L',s'} $.
\end{itemize}
\end{prop}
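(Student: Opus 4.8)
The plan is to descend the isomorphism of Kuranishi families furnished by Proposition \ref{birational_kuranishi} from $\mathrm{Def}(X)$ and $\mathrm{Def}(X')$ to the hypersurfaces $\mathrm{Def}(X,L)$ and $\mathrm{Def}(X',L')$. First I would invoke Proposition \ref{birational_kuranishi} to obtain dense open sets $U\subset\mathrm{Def}(X)$ and $U'\subset\mathrm{Def}(X')$, an isomorphism $\varphi:U\cong U'$, and a family isomorphism $\tilde{\phi}$ whose fibrewise action on $H^{2}$ agrees with $\phi_{*}$ by assertion (3). The geometric input I would rely on is the description of $\mathrm{Def}(X,L)$ from \cite[(1.14)]{MR1664696}: via the local period map it is exactly the smooth hypersurface on which the flat class $c_{1}(L)$ remains of type $(1,1)$, i.e. $\{s : q(c_{1}(\mathfrak{L}_{s}),\sigma_{s})=0\}$ with $\sigma_{s}$ spanning $H^{2,0}(\mathfrak{X}_{s})$, and likewise $\mathrm{Def}(X',L')$ is the analogous hypersurface attached to $c_{1}(L')$.

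Next I would show that $\varphi$ carries $U\cap\mathrm{Def}(X,L)$ onto $U'\cap\mathrm{Def}(X',L')$. For $s\in U$, assertion (3) of Proposition \ref{birational_kuranishi} gives $\eta'^{-1}\circ(\phi_{s})_{*}\circ\eta=\phi_{*}$, and by hypothesis $\phi_{*}c_{1}(L)=c_{1}(L')$, so the Hodge isometry $(\phi_{s})_{*}$ sends the parallel class $c_{1}(\mathfrak{L}_{s})$ to $c_{1}(\mathfrak{L}'_{s'})$; being a Hodge isometry it preserves the $(1,1)$-condition, whence $s\in\mathrm{Def}(X,L)$ forces $s'=\varphi(s)\in\mathrm{Def}(X',L')$. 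Running the same argument for $\phi^{-1}$ (for which $\phi^{-1}_{*}L'\cong L$) yields the reverse inclusion, so $\varphi$ restricts to an isomorphism $U_{L}:=U\cap\mathrm{Def}(X,L)\cong U'\cap\mathrm{Def}(X',L')=:U'_{L'}$. Since $\mathfrak{X}_{L}=\mathfrak{X}\times_{\mathrm{Def}(X)}\mathrm{Def}(X,L)$ and similarly for $X'$, restricting $\tilde{\phi}$ over $U_{L}$ produces the commutative diagram of assertion (1). For assertion (2) I would observe that over $U_{L}$ the map $\phi_{s}$ is an honest isomorphism of fibres with $(\phi_{s})_{*}c_{1}(\mathfrak{L}_{s})=c_{1}(\mathfrak{L}'_{s'})$ by the computation just made; as irreducible symplectic manifolds are simply connected, $H^{2}(\mathfrak{X}'_{s'},\mathbb{Z})$ is torsion free and $c_{1}$ is injective on $\mathrm{Pic}$, so equality of first Chern classes upgrades to $(\phi_{s})_{*}\mathfrak{L}_{s}\cong\mathfrak{L}'_{s'}$.

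The delicate point, and the step I expect to be the main obstacle, is the estimate on the complement in assertion (1): that $\mathrm{Def}(X,L)\setminus U_{L}=\mathrm{Def}(X,L)\cap(\mathrm{Def}(X)\setminus U)$ is contained in a countable union of hypersurfaces of $\mathrm{Def}(X,L)$. The trouble is that $\mathrm{Def}(X)\setminus U$ is only known to lie in the Noether--Lefschetz locus $\{\dim H^{1,1}(\mathbb{Q})\geq 1\}$, while $\mathrm{Def}(X,L)$ is itself entirely contained in that locus, since $c_{1}(L)$ is algebraic along all of it; a naive intersection could therefore swallow the whole hypersurface. To circumvent this I would exploit the hypothesis $q_{X}(L)\geq 0$ together with Lemma \ref{Picard_number_one}: at a very general $s\in\mathrm{Def}(X,L)$ one has $\dim H^{1,1}(\mathfrak{X}_{s},\mathbb{Q})=1$, generated by $c_{1}(\mathfrak{L}_{s})$ with $q(c_{1}(\mathfrak{L}_{s}))=q_{X}(L)\geq 0$, so every rational $(1,1)$-class has nonnegative square and the bimeromorphic fibres $\mathfrak{X}_{s}$ and $\mathfrak{X}'_{s'}$ are in fact isomorphic, placing $s$ in $U$. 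This shows $U_{L}$ is dense in $\mathrm{Def}(X,L)$, and confines its complement to $\{\dim H^{1,1}(\mathbb{Q})\geq 2\}$, which cuts $\mathrm{Def}(X,L)$ in a countable union of hypersurfaces. Making rigorous that a very general point of $\mathrm{Def}(X,L)$ lands in $U$ — rather than merely having isomorphic fibres — is the part that must be handled with care.
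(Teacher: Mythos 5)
Your proposal is correct and takes essentially the same route as the paper: set $U_{L}=U\cap\mathrm{Def}(X,L)$ and $U'_{L'}=U'\cap\mathrm{Def}(X',L')$, deduce assertion (2) from assertion (3) of Proposition \ref{birational_kuranishi}, and show that every point $s\in\mathrm{Def}(X,L)$ with $\dim H^{1,1}(\mathfrak{X}_{s},\mathbb{Q})=1$ lies in $U$ by combining the deformation-invariance of $q_{\mathfrak{X}_{s}}(\mathfrak{L}_{s})=q_{X}(L)\ge 0$ with Lemma \ref{Picard_number_one}, confining the complement to the locus $\{\dim H^{1,1}(\mathfrak{X}_{s},\mathbb{Q})\ge 2\}$. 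The step you flag as delicate is treated the same way in the paper, which makes the bimeromorphy of the limit fibres explicit via a punctured-disk argument and \cite[Theorem 4.3]{MR1664696} before invoking Lemma \ref{Picard_number_one}.
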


\begin{proof}
We use the same notation in the statements and 
the proof of Proposition \ref{birational_kuranishi}.
If $ U\cap \mathrm{Def}(X,L) \ne \emptyset $, 
then $ U_{L} := U\cap \mathrm{Def}(X,L) $
and $ U'_{L'} := U'\cap \mathrm{Def}(X',L') $ satisfies the assertion (1) and
every point $ s \in U_{L} $ satisfy the assertion of (2) because the restricted
isomorphism satisfies the assertion (3) of Proposition \ref{birational_kuranishi}.
Let $s$
be a point of $\mathrm{Def}(X,L)$ such that 
$\dim H^{1,1}(\mathfrak{X}_{s},\mathbb{Q})=1$, 
where $\mathfrak{X}_{s}$ is the 
fibre at $s$. We will prove that $ s \in U $.
Since $ U $ is dense and open,
there exists
 a small disk $ S $ of $ \mathrm{Def}(X) $  
 such that $ s \in S $ and
 $ S\setminus \{s\} \subset U$.
 We denote $ \varphi (s) $ by $ s' $  and $ \varphi (S) $ by $ S' $.
 If we consider the base changes $ \mathfrak{X} \to \mathrm{Def}(X)$ by
 $ S $ and $ \mathfrak{X'}\to \mathrm{Def}(X') $ by 
 $ S' $, 
 we obtain the following diagram:
\[
\xymatrix{
 \mathfrak{X}_{S\setminus \{s\}} \ar[d] \ar[r]^{\cong}
  & \mathfrak{X}'_{S'\setminus \{s'\}} \ar[d] \\
 S \setminus s\ar[r]^{\varphi}_{\cong} & S'\setminus \{s'\}
 },
\]
By \cite[Theorem 4.3]{MR1664696}, there exists a birational map
$\mathfrak{X}_{s} \dasharrow \mathfrak{X}'_{s'}$.
By the definition of the Beauville-Bogomolov quadratic form \cite[Page 772]{Beauville}, the function
\[
\mathrm{Def}(X,L) \ni s \mapsto q_{\mathfrak{X}_{s}}(\mathfrak{L}_{s}) \in \mathbb{Z}
\]
is constant, where $ q_{\mathfrak{X}_{s}} $ stands for the Beauville-Bogomolov quadratic form
on $ H^{2}(\mathfrak{X}_{s},\mathbb{C}) $. Thus we have
$ q_{X}(L) = q_{\mathfrak{X}_{s}}(\mathfrak{L}_{s}) \ge 0 $.
By Lemma \ref{Picard_number_one},
$\phi_{s}$ is an ismorphism. This implies that $ s \in U $.
\end{proof}

\section{Proof of Theorem }
We start with giving a numerical criterion of existence of Lagrangian
fibrations.

\begin{lemma}\label{numerical_property}
Let $X$ be an irreducible symplectic manifold and $L$  a line bundle on $X$.
The linear system $|L|$ defines a 
Lagrangian fibration over the projective space
if and only if $L$ is nef and  $L$ has the following property:
\begin{equation}\label{global_section}
\dim H^{0}(X,L^{\otimes k}) = \dim H^{0}(\mathbb{P}^{1/2\dim X}, \mathcal{O}(k))
\end{equation}
for every positive integer $k$.
\end{lemma}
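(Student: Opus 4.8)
The plan is to prove the two implications separately; the forward direction is routine, and the backward one carries the real content. Throughout write $n := \tfrac12\dim X$, and let $R(X,L)=\bigoplus_{k\ge0}H^{0}(X,L^{\otimes k})$ denote the section ring.

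For the forward direction, suppose $g := \Phi_{|L|}$ is a Lagrangian fibration onto $\mathbb{P}^{n}$, so that $L \cong g^{*}\mathcal{O}(1)$. Then $L$ is nef, being the pullback of the ample bundle $\mathcal{O}(1)$ under a morphism. For the cohomological condition I would apply the projection formula to obtain
\[
 H^{0}(X,L^{\otimes k}) \cong H^{0}\bigl(\mathbb{P}^{n},\mathcal{O}(k)\otimes g_{*}\mathcal{O}_{X}\bigr),
\]
and then note that, since a general fibre of $g$ is connected and $\mathbb{P}^{n}$ is normal, the Stein factorisation of $g$ forces $g_{*}\mathcal{O}_{X}\cong\mathcal{O}_{\mathbb{P}^{n}}$. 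The right-hand side therefore equals $H^{0}(\mathbb{P}^{n},\mathcal{O}(k))$, which is precisely (\ref{global_section}).

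For the backward direction, assume $L$ is nef and satisfies (\ref{global_section}). First I would extract the numerical consequences: the growth $\dim H^{0}(X,L^{\otimes k})=\binom{n+k}{n}\sim k^{n}/n!$ shows $\kappa(X,L)=n$ and forces the top self-intersection $L^{\dim X}$ to vanish; feeding this into the Fujiki relation $L^{\dim X}=c_{X}\,q(L)^{n}$ yields $q(L)=0$. The decisive step, which I expect to be the main obstacle, is to prove that $L$ is base-point free, equivalently semi-ample. Because $K_{X}$ is trivial and $L$ is nef but \emph{not} big (as $L^{\dim X}=0$), the Kawamata--Shokurov base-point-free theorem does not apply, so this is exactly the point at which the symplectic geometry of $X$ must enter; I would invoke a semi-ampleness statement for nef isotropic classes rather than attempt it through general vanishing arguments. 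Granting semi-ampleness, $L$ determines a morphism $g:X\to Z:=\operatorname{Proj}R(X,L)$ with $g_{*}\mathcal{O}_{X}\cong\mathcal{O}_{Z}$, hence connected fibres, and with $\dim Z=\kappa(X,L)=n$.

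It then remains to identify $Z$ and to promote $g$ to a Lagrangian fibration. The numerical hypothesis gives $\dim H^{0}(Z,\mathcal{O}_{Z}(k))=\binom{n+k}{n}$ for all $k$, so the Hilbert polynomial of the ample bundle $\mathcal{O}_{Z}(1)$ has leading term $k^{n}/n!$, that is $\mathcal{O}_{Z}(1)^{n}=1$. The finite morphism $Z\to\mathbb{P}^{n}$ attached to a basis $s_{0},\dots,s_{n}$ of $H^{0}(X,L)$ therefore has degree one, and, being finite and birational onto the normal variety $\mathbb{P}^{n}$, is an isomorphism; thus $Z\cong\mathbb{P}^{n}$, and since $\operatorname{Pic}(\mathbb{P}^{n})$ is torsion free we get $L\cong g^{*}\mathcal{O}(1)$ with $g=\Phi_{|L|}$. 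Finally $g:X\to\mathbb{P}^{n}$ is a surjective morphism with connected fibres onto a smooth projective variety satisfying $0<n<\dim X$, so the structure theorem recalled in the introduction applies and shows that $g$ is a Lagrangian fibration over $\mathbb{P}^{n}$, completing the backward implication.
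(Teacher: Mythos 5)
Your forward direction and your endgame (identifying $Z=\operatorname{Proj}R(X,L)$ with $\mathbb{P}^{n}$, connected fibres from $g_{*}\mathcal{O}_{X}\cong\mathcal{O}_{Z}$, the Lagrangian property from the fibration structure theorem) are sound and run parallel to the paper. But there is a genuine gap at precisely the step you yourself flag as ``the main obstacle'': semi-ampleness of $L$. You propose to ``invoke a semi-ampleness statement for nef isotropic classes,'' but no such unconditional statement exists. The assertion that a nef isotropic line bundle on an irreducible symplectic manifold is semi-ample is essentially the nef form of Conjecture~\ref{conjecture} itself --- the hyperk\"ahler SYZ/abundance conjecture --- known only in special deformation types (e.g.\ $K3^{[n]}$ and generalized Kummer, via the results quoted for Corollary~\ref{application}). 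Invoking it here would be circular: the whole point of Lemma~\ref{numerical_property} is to provide a checkable criterion (nefness plus the section growth (\ref{global_section})) from which the fibration can be \emph{deduced}.

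The paper's proof supplies the missing idea, and it is not hyperk\"ahler-specific input but a general MMP theorem: Kawamata's good-divisor theorem \cite[Theorem 6.1]{kawamata-freeness}, with Fujino's extension \cite[Theorem 1.1]{fujino-freeness}, which says that on a variety with trivial canonical bundle a nef divisor $L$ satisfying $\nu(L)=\kappa(L)$ is semi-ample. The hypothesis (\ref{global_section}) gives $\kappa(L)=\tfrac12\dim X$; the symplectic geometry enters only through the Fujiki relation $c_{X}\,q(kL+\alpha)^{1/2\dim X}=(kL+\alpha)^{\dim X}$ for a K\"ahler class $\alpha$, whose expansion in $k$ shows that the \emph{numerical} dimension is also $\nu(L)=\tfrac12\dim X$. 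Note that your computation of $q(L)=0$ is correct but is not the needed output of the Fujiki relation: what Kawamata's theorem requires is $\nu(L)$, which is finer information than $L^{\dim X}=0$ (one must see that $L^{j}\cdot\alpha^{\dim X-j}\ne 0$ exactly up to $j=\tfrac12\dim X$). With $\nu(L)=\kappa(L)$ established, semi-ampleness follows and the rest of your argument goes through. As written, your proposal correctly isolates the decisive step but replaces it by an appeal to a conjectural statement rather than by the $\nu=\kappa$ criterion that the numerical hypothesis (\ref{global_section}) was designed to feed.
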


\begin{proof}
If $|L|$ defines a Lagrangian fibration over the projective space, 
it is trivial that $L$ is nef and
the dimension of global sections of $L^{\otimes k}$
satisfies the equation (\ref{global_section})
by Definition \ref{rational_Lag_def}. Thus we prove that $|L|$ defines
a Lagrangian fibration under the assumption that $L$ is nef and $\dim H^{0}(X,L^{\otimes k})$ 
satisfy the equation (\ref{global_section}).
By the assumption, the linear system $|L|$ defines
a rational map $X \dasharrow \mathbb{P}^{1/2\dim X}$. Let $\nu : Y \to X$ be
a resolution of indeterminacy and $g : Y \to \mathbb{P}^{1/2\dim X}$ 
is the induced morphism. 
Comparing $\nu^{*}L$ and $g^{*}\mathcal{O}(1)$, we have
\[
 \nu^{*}L \cong g^{*}\mathcal{O}(1)+F,
\]
where $F$ is a $\nu$-exceptional divisor. By multiplying the both hand sides,
we have
\[
 k\nu^{*}L \cong g^{*}\mathcal{O}(k) + kF.
\]
If $F \ne 0$, then the above isomorphism and the equality (\ref{global_section})
implies that $L$ is not semiample.
By the assumption, 
 $L$ is nef. 
If $ L^{\dim X} \ne 0 $, then $ L $ is also big and 
$ \dim H^{0}(X,L^{\otimes k}) $ does not satisfy the 
equation (\ref{global_section}). Thus $ L^{\dim X} = 0 $.
By \cite[Theorem 4.7]{MR946237}, we obtain
\[
 c_{X}q(kL + \alpha)^{1/2\dim X} = (kL + \alpha)^{\dim X},
\]
 where $ q_{X} $ is the Beauville-Bogomolov quadratic form
 on $ H^{2}(X,\mathbb{C}) $,
 $ c_{X} $ is the positive constant of $ X $ and $ \alpha $
 is a K\"ahler class of $ H^{1,1}(X,\mathbb{C}) $.
 Comparing the degrees of both hand sides of the above equation,
 we obtain that the numerical Kodaira dimension $ \nu (L) $ is $ (1/2)\dim X $.
 By the equation (\ref{global_section}), the Kodaira dimension
 $ \kappa (L) $ is also equal to $ (1/2)\dim X $.
Since $ K_{X} $ is trivial, the equality $ \nu (L) = \kappa (L) $ impies
that $ L $ is semiample by
 \cite[Theorem 6.1]{kawamata-freeness} 
and \cite[Theorem 1.1]{fujino-freeness}.
Thus $F = 0$ and 
the linear system $|L|$ defines the morphism 
$
f: X \to \mathbb{P}^{1/2\dim X} $.
The linear system $|lL|$ defines 
a morphism 
\[
f_{l} : X \to \mathrm{Proj}\oplus_{m\ge 0} H^{0}(X,L^{\otimes ml}) \cong 
\mathbb{P}^{\begin{pmatrix}
n+l \\ 
n
\end{pmatrix} -1 }.
\]
This morphism has
 connected fibres if $ l $ is sufficiently large. By the above
 expression,
$ f_{l} $ is the composition of $f$ and the Veronese embedding. This implies that
$f$ has connected fibres. 
\end{proof}

We introduce a criterion which asserts locally freeness of
direct images of line bundles.

\begin{lemma}\label{nakayama_freeness}
 Let $ \pi : \mathfrak{X}_{S} \to S $ be a smooth morphism over
 a small disk $ S $ with the reference point $ o $.
 We also let $ \mathfrak{L}_{S} $ be a line bundle on 
 $ \mathfrak{X}_{S} $.
 Assume that $ \mathfrak{X}_{S} $ and $ \mathfrak{L}_{S} $
 satisfy the following conditions.
 \begin{itemize}
 \item[(1)] The canonical bundle of every fibre is trivial.
 \item[(2)] For every point $ t $ of $ S \setminus \{o\}$,
           the restriction $ \mathfrak{L}_{S,t} $ of $ \mathfrak{L}_{S} $
           to
           the fibre $ \mathfrak{X}_{S,t} $ at $ t $
           is semiample.
 \item[(3)] The restriction $ \mathfrak{L}_{S,o} $ 
            of $ \mathfrak{L}_{S} $ to
 			the fibre $ \mathfrak{X}_{S,o} $ at $ o $
 			is nef.
 \end{itemize}
  Then the higher direct images $ R^{q}\pi_{*}\mathfrak{L}^{\otimes k}_{S} $
  are locally free for all $ q \ge 0 $ and $ k \ge 1$. Moreover
  the morphisms
  \begin{equation}\label{base_change}
     R^{q}\pi_{*}\mathfrak{L}^{\otimes k}_{S}\otimes k(o) \to 
     H^{q}(\mathfrak{X}_{S,o}, \mathfrak{L}_{S,o})
  \end{equation}
  are isomorphic for all $ q \ge 0 $ and $ k \ge 1 $.
\end{lemma}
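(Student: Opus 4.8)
The plan is to derive both conclusions from the single statement that every $R^{q}\pi_{*}\mathfrak{L}_{S}^{\otimes k}$ is locally free, exploiting that the base $S$ is one-dimensional. Indeed, on the smooth disk $S$ a coherent sheaf is locally free if and only if it is torsion free, so local freeness of the (coherent, since $\pi$ is proper) sheaves $R^{q}\pi_{*}\mathfrak{L}_{S}^{\otimes k}$ is equivalent to their having no torsion. Once local freeness is known in every degree, the base-change isomorphisms (\ref{base_change}) come for free: by Grothendieck's theorem on cohomology and base change the map
\[
 \varphi^{q}(o)\colon R^{q}\pi_{*}\mathfrak{L}_{S}^{\otimes k}\otimes k(o)\longrightarrow H^{q}(\mathfrak{X}_{S,o},\mathfrak{L}_{S,o}^{\otimes k})
\]
is surjective in the top degree, hence an isomorphism there, and a descending induction shows that the local freeness of $R^{q}\pi_{*}\mathfrak{L}_{S}^{\otimes k}$ propagates the isomorphism from degree $q$ to degree $q-1$; thus all $\varphi^{q}(o)$ are isomorphisms and, in particular, the fibrewise dimensions $\dim H^{q}(\mathfrak{X}_{S,t},\mathfrak{L}_{S,t}^{\otimes k})$ are constant. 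The whole lemma therefore reduces to proving the torsion freeness of the higher direct images.

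To establish torsion freeness I would bring the direct images into the range of Koll\'ar--Nakayama type theorems. By condition (1) the relative canonical bundle $\omega_{\mathfrak{X}_{S}/S}$ is trivial, so
\[
 \mathfrak{L}_{S}^{\otimes k}\cong \omega_{\mathfrak{X}_{S}/S}\otimes \mathfrak{L}_{S}^{\otimes k},
\]
which rewrites $R^{q}\pi_{*}\mathfrak{L}_{S}^{\otimes k}$ as a higher direct image of the relative dualizing sheaf twisted by $\mathfrak{L}_{S}^{\otimes k}$. Over the punctured disk $S\setminus\{o\}$ every restriction $\mathfrak{L}_{S,t}$ is semiample by condition (2), which is exactly the positivity hypothesis under which Koll\'ar's torsion freeness theorem applies, so no torsion of $R^{q}\pi_{*}\mathfrak{L}_{S}^{\otimes k}$ can be supported away from $o$.

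The decisive point, and the one I expect to be the main obstacle, is to rule out torsion supported at the central point $o$, where $\mathfrak{L}_{S,o}$ is only assumed to be nef rather than semiample by condition (3). Here Koll\'ar's theorem in its classical form does not apply, because $\mathfrak{L}_{S}$ need not be semiample on the total space, and the naive numerical approach fails: invariance of the Euler characteristic $\chi(\mathfrak{X}_{S,t},\mathfrak{L}_{S,t}^{\otimes k})$ together with upper semicontinuity of cohomology only forces an alternating sum of the possible jumps at $o$ to vanish, which does not exclude cancelling jumps. To cross this last step I would invoke Nakayama's refinement of the torsion freeness and vanishing statements, which allows the twisting bundle to be merely nef on the special fibre provided it is semiample on the general fibre and the relative canonical bundle is trivial --- precisely conditions (1), (2) and (3). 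Verifying that this refinement applies to $(\pi,\mathfrak{L}_{S})$, and hence that $R^{q}\pi_{*}\mathfrak{L}_{S}^{\otimes k}$ has no torsion at $o$, is the heart of the argument; once it is in place, the reduction in the first paragraph delivers both the local freeness and the base-change isomorphisms (\ref{base_change}) for all $q\ge 0$ and $k\ge 1$.
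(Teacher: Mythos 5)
Your proposal is correct and follows essentially the same route as the paper: the crucial local-freeness (equivalently, torsion-freeness over the one-dimensional base) is delegated to Nakayama's theorem, whose hypotheses are exactly conditions (1)--(3) --- the paper cites it as \cite[Corollary 3.14]{nakayama-freeness} --- and the base-change isomorphisms are then obtained, just as in the paper, by descending induction from the top degree (where both sides vanish) using the cohomology-and-base-change/cohomological-flatness criterion. The only cosmetic difference is your intermediate reformulation of local freeness as torsion-freeness and the separate appeal to Koll\'ar's theorem away from $o$, which Nakayama's statement already covers.
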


\begin{proof}
The first part is a special case of \cite[Corollary 3.14]{nakayama-freeness}.
By the criteria of cohomological flatness in \cite[page 134]{MR0463470},
if $ R^{q}\pi_{*}\mathfrak{L}^{\otimes k}_{S}$ is locally free and the morphism
(\ref{base_change}) is isomorphic, then the morphism
\[
     R^{q-1}\pi_{*}\mathfrak{L}^{\otimes k}_{S}\otimes k(o) \to 
     H^{q-1}(\mathfrak{X}_{S,o}, \mathfrak{L}_{S,o})
\]
is also isomorphic for every $ k \ge 1 $. If $ q \ge \dim \mathfrak{X}_{S,s} + 1 $,
the both hand sides of the morphism (\ref{base_change}) are zero.
By a reverse induction, we obtain the last part of the assertions of Lemma.
\end{proof}

We need one more lemma to prove Theorem \ref{main_result}.
\begin{lemma}\label{nefness_of_non_projective}
Let $ X $ be an irreducible symplectic manifold. Assume that
$ X $ is not projective. Then a line bundle $ L $ such that
$ q_{X}(L) = 0$ is nef, where $ q_{X} $ is the
Beauville-Bogomolov quadratic form on $ H^{2}(X,\mathbb{C}) $.
\end{lemma}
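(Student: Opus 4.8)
The plan is to use the positive cone structure of an irreducible symplectic manifold together with the isotropy of $L$. Recall that for a non-projective irreducible symplectic manifold $X$, the space $H^{1,1}(X,\mathbb{R})$ carries the Beauville-Bogomolov form $q_{X}$, which has signature $(1, h^{1,1}-1)$ on this real vector space. The set $\{\alpha \in H^{1,1}(X,\mathbb{R}) ; q_{X}(\alpha) > 0\}$ therefore has two connected components (the two halves of the positive cone), and the positive cone $\mathcal{C}_{X}$ is by definition the connected component containing a K\"ahler class. My first step is to locate $c_{1}(L)$ relative to this cone: since $q_{X}(L) = 0$, the class $c_{1}(L)$ lies on the boundary of the light cone, i.e. on the closure $\overline{\mathcal{C}_{X}}$ or its negative $-\overline{\mathcal{C}_{X}}$.

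The key structural input I would invoke is \cite[Corollary 7.2]{MR1664696} (already used in the proof of Lemma \ref{Picard_number_one}), which identifies the positive cone $\mathcal{C}_{X}$ with the K\"ahler cone precisely when $X$ carries no curves and no effective divisors. The crucial observation for a \emph{non-projective} $X$ is that $X$ has no nonzero effective divisors at all: if $D$ were an effective divisor, then the projection formula and the Hodge-index-type properties of $q_{X}$ would force $q_{X}(D) \geq 0$ with $D$ pairing positively against K\"ahler classes, and the existence of such a divisor is what lets one construct a projective embedding, contradicting non-projectivity. Concretely, non-projectivity means the K\"ahler cone meets no rational class generating a big divisor, so every pseudo-effective class has non-positive self-intersection in a way that pins it to the boundary. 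Thus the entire closed positive cone $\overline{\mathcal{C}_{X}}$ consists of nef classes: a class $\alpha$ with $q_{X}(\alpha) \geq 0$ lying in the correct half is a limit of K\"ahler classes and hence nef.

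The main step is therefore to show that $c_{1}(L)$ (or $-c_{1}(L)$) lies in the correct half $\overline{\mathcal{C}_{X}}$ rather than in $-\overline{\mathcal{C}_{X}}$, and then to rule out the negative case. Since nefness only makes sense up to the sign of $L$, and the statement asserts $L$ itself is nef, I would argue that after possibly replacing $L$ by $-L$ one half is the boundary of the K\"ahler cone; but the point $c_{1}(L)$, being isotropic and nonzero, must lie in the closure of one of the two halves of the light cone. The pairing $q_{X}(c_{1}(L), \alpha)$ with a K\"ahler class $\alpha$ has a definite sign by continuity (it cannot vanish, else $c_{1}(L)$ would be orthogonal to an interior point of a cone on which it is isotropic, forcing $c_{1}(L)=0$ by the signature $(1,*)$ of $q_{X}$). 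This sign determines which half contains $c_{1}(L)$, and in the half $\overline{\mathcal{C}_{X}}$ every class is a limit of K\"ahler classes, hence nef since the nef cone is closed and contains the K\"ahler cone.

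I expect the main obstacle to be the rigorous verification that a non-projective irreducible symplectic manifold carries no nonzero effective divisors, so that \cite[Corollary 7.2]{MR1664696} applies and the full closed positive cone is nef; this is where one must be careful, because the absence of \emph{effective} divisors (as opposed to merely the absence of ample classes) is what distinguishes the non-projective case and is precisely what forces the boundary isotropic class to be nef. The secondary subtlety is the sign issue: ensuring that the relevant half-cone genuinely contains $c_{1}(L)$ and that the conclusion ``$L$ is nef'' is stated for the correct orientation of $L$, which I would handle by the continuity-of-pairing argument sketched above together with the observation that an isotropic class nonzero in $H^{1,1}$ cannot be $q_{X}$-orthogonal to a K\"ahler class.
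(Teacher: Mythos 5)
Your proposal has a genuine gap at its central step: the claim that a non-projective irreducible symplectic manifold carries no nonzero effective divisors (equivalently, that its nef cone is all of the closed positive cone $\overline{\mathcal{C}_{X}}$) is false. Already in dimension two there are counterexamples: a K3 surface whose Picard group is generated by a single class $D$ with $q(D)=D^{2}=-2$ is non-projective (its Picard lattice contains no class of positive square), yet by Riemann--Roch either $D$ or $-D$ is effective, and one checks it is represented by an irreducible smooth rational curve $C$. Then any class $\alpha\in\overline{\mathcal{C}_{X}}$ with $q(\alpha,C)<0$ (such classes exist, since $q(C)<0$ means $C$ does not lie in the self-dual cone $\overline{\mathcal{C}_{X}}$) is not nef. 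So the hypothesis of \cite[Corollary 7.2]{MR1664696} --- no curves and no effective divisors --- does not follow from non-projectivity, and this is exactly the step you yourself flagged as ``the main obstacle'': it cannot be repaired, because the statement you need is false. Non-projectivity only excludes \emph{integral} classes of \emph{positive} square; it says nothing about negative-square effective divisors, which are precisely what can cut the nef cone strictly inside $\overline{\mathcal{C}_{X}}$.

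The paper's proof exploits non-projectivity in a different and correct way, through the projectivity criterion rather than through a structural claim about the effective cone. Assuming $L$ is not nef, \cite[Theorem 7.1]{MR1664696} produces a \emph{line bundle} $M$ (integrality is the crucial point) with $q_{X}(M,L)<0$ and $q_{X}(M,\alpha)\ge 0$ for every K\"ahler class $\alpha$; then for a rational $\lambda$ of suitable sign and small absolute value,
\[
q_{X}(L+\lambda M)\;=\;2\lambda\, q_{X}(L,M)+\lambda^{2}q_{X}(M)\;>\;0 ,
\]
so $X$ carries a rational $(1,1)$-class of positive Beauville--Bogomolov square, hence is projective by \cite[Theorem 2]{MR1965365}, a contradiction. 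In other words, the role of non-projectivity is to forbid integral classes of positive square, and the content of the lemma is that non-nefness of an isotropic line bundle would manufacture such a class. (Your worry about replacing $L$ by $-L$ is a fair one, but it afflicts the paper's formulation equally --- in the applications $\mathfrak{L}_{t}$ is a limit of effective or semiample bundles, which fixes the sign --- and it is a minor issue compared with the failure of your main step.)
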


\begin{proof}
 Assume that $ L $ is not nef.
 By \cite[Theorem 7.1]{MR1664696}, there exists a line bundle $ M $ on $ X $
 such that $ q_{X}(M,L) < 0 $ and $ q_{X}(M,\alpha) \ge 0 $ for all K\"ahler
 class $ \alpha \in H^{2}(X,\mathbb{C})$. 
 If we choose a suitable rational number $ \lambda $, we have $ q_{X}(L + \lambda M) > 0 $.
 This implies that $ X $ is projective by \cite[Theorem 2]{MR1965365}. That
 is a contradiction.
\end{proof}

Now we prove that if $ V_{\mathrm{reg}} \ne \emptyset $ then $ V_{\mathrm{reg}} $ is 
a dence open subset of $ \mathrm{Def}(X,L) $.

\begin{lemma}\label{nef_locus}
We use the same notation as in Theorem \ref{main_result}.  If 
$V_{\mathrm{reg}} \ne \emptyset$,
$V_{\mathrm{reg}}$ is  dense  and open in $\mathrm{Def}(X,L)$. Moreover
 $ \mathrm{Def}(X,L)\setminus V_{\mathrm{reg}} $ is contained in 
a countably union of hypersurfaces of $ \mathrm{Def}(X,L) $.
\end{lemma}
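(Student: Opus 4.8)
The plan is to read off $V_{\mathrm{reg}}$ through the numerical criterion of Lemma \ref{numerical_property} and then control, separately across the family, the two conditions it involves: nefness of $\mathfrak{L}_t$ and the section count $\dim H^{0}(\mathfrak{X}_t,\mathfrak{L}_t^{\otimes k})=\dim H^{0}(\mathbb{P}^{1/2\dim X},\mathcal{O}(k))$. First I would record that, since the Beauville--Bogomolov form is a deformation invariant (as used in the proof of Proposition \ref{Birationarity-of-Kuranishi}), one has $q_{\mathfrak{X}_t}(\mathfrak{L}_t)=q_X(L)=0$ for every $t\in\mathrm{Def}(X,L)$. Thus a point $t$ lies in $V_{\mathrm{reg}}$ precisely when $\mathfrak{L}_t$ is nef and its powers have the expected number of sections.

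The nefness condition is handled on a dense set by Hodge theory. Because $\mathrm{Def}(X,L)$ parametrises deformations keeping $L$ of type $(1,1)$ and $L$ is isotropic, the generic fibre has no integral $(1,1)$-class of positive square, hence is non-projective by the criterion of \cite{MR1965365} quoted in Lemma \ref{nefness_of_non_projective}. The projective locus $P$ is the union, over the countably many classes $\beta\in H^{2}(X,\mathbb{Z})$, of the Noether--Lefschetz loci where $\beta$ becomes of type $(1,1)$; each is a hypersurface of $\mathrm{Def}(X,L)$, so $P$ is contained in a countable union of hypersurfaces and its complement $W$ is dense and, being the complement of a real codimension $\ge 2$ set in a connected manifold, connected. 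On $W$ Lemma \ref{nefness_of_non_projective} applies and $\mathfrak{L}_t$ is automatically nef, so the nefness half of the criterion holds off $P$.

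It then remains to pin the section count to the value $\binom{n+k}{n}$, with $n=\tfrac{1}{2}\dim X$, that it takes at the given point $o\in V_{\mathrm{reg}}$, and here Lemma \ref{nakayama_freeness} is the engine. I would first prove \textbf{openness}: near $o$ the bundle $\mathfrak{L}_o$ is semiample, so some power is globally generated; global generation together with the base-change isomorphism of Lemma \ref{nakayama_freeness} spreads the semiampleness of $\mathfrak{L}_t$ to a neighbourhood of $o$, the induced morphisms to $\mathbb{P}^{1/2\dim X}$ deform with constant $h^{0}$, and the connectedness and Lagrangian properties of the fibres persist, so a neighbourhood of $o$ lies in $V_{\mathrm{reg}}$. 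Since $W$ is dense, this neighbourhood meets $W$, giving a point $o'\in W\cap V_{\mathrm{reg}}$. With $V_{\mathrm{reg}}$ open and nonempty it contains a disk of semiample fibres, which I would feed into Lemma \ref{nakayama_freeness} as the punctured-disk hypothesis: centring disks at points of the nef locus $W$ and using the semiample fibres off the centre, the lemma forces every $R^{q}\pi_*\mathfrak{L}^{\otimes k}$ to be locally free with base change, so each $h^{q}(\mathfrak{X}_t,\mathfrak{L}_t^{\otimes k})$, in particular $h^{0}$, is locally constant. Propagating this constancy across the connected set $W$ from $o'$ yields $h^{0}(\mathfrak{X}_t,\mathfrak{L}_t^{\otimes k})=\binom{n+k}{n}$ throughout $W$, so $W\subseteq V_{\mathrm{reg}}$; combined with the previous paragraph this gives $V_{\mathrm{reg}}$ dense with $\mathrm{Def}(X,L)\setminus V_{\mathrm{reg}}\subseteq P$.

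The main obstacle is exactly the gap between \emph{nef} and \emph{semiample}: Lemma \ref{nakayama_freeness} requires semiampleness on the punctured disk, whereas Lemma \ref{nefness_of_non_projective} only returns nefness on $W$. The delicate point is therefore the bootstrap, using the openness of $V_{\mathrm{reg}}$ to manufacture a positive-dimensional family of semiample fibres and the connectedness of $W$ to transport the resulting cohomological flatness from $o'$ to an arbitrary point, rather than any single estimate. I would also need to check carefully that the deformed morphisms in the openness step remain fibrations over $\mathbb{P}^{1/2\dim X}$ with connected Lagrangian fibres, as demanded by Definition \ref{rational_Lag_def} and Lemma \ref{numerical_property}.
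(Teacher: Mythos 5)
Your overall architecture is the paper's: deformation invariance of $q_{\mathfrak{X}_t}(\mathfrak{L}_t)=0$, non-projectivity of the fibres off the Noether--Lefschetz hypersurfaces so that Lemma \ref{nefness_of_non_projective} yields nefness there, Nakayama's Lemma \ref{nakayama_freeness} on punctured disks to get base change and the expected section count, the numerical criterion of Lemma \ref{numerical_property}, and a connectedness argument for the complement of countably many hypersurfaces. However, there is a genuine gap at the openness step, and it is precisely the step where the paper uses a tool you never invoke.

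At a point $o\in V_{\mathrm{reg}}$ you know only that the central fibre's bundle $\mathfrak{L}_o$ is free; you know nothing about the neighbouring fibres. Lemma \ref{nakayama_freeness} cannot be applied there: its hypothesis (2) requires $\mathfrak{L}_t$ to be semiample for \emph{every} $t$ in the punctured disk, which is exactly what is unknown when openness is the thing being proved. You half-acknowledge this circularity in your final paragraph, but the proposed bootstrap does not break it, because ``manufacturing a positive-dimensional family of semiample fibres'' already presupposes openness, and likewise your propagation of cohomological flatness across $W$ needs punctured disks of semiample fibres centred at points not yet known to be near $V_{\mathrm{reg}}$. The failure is not merely formal: $h^0$ is only upper semicontinuous, so without some cohomological input the $n+1$ sections of $\mathfrak{L}_o$ (where $n=\tfrac12\dim X$) could a priori die in nearby fibres and global generation would not spread. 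The paper supplies that input as follows: for $t\in V_{\mathrm{reg}}$ with Lagrangian fibration $f_t:\mathfrak{X}_t\to\mathbb{P}^{n}$, the Leray spectral sequence together with the theorem $R^{1}(f_t)_*\mathcal{O}_{\mathfrak{X}_t}\cong\Omega^{1}_{\mathbb{P}^{n}}$ of \cite{higher-direct-image} gives
\begin{equation*}
H^{1}(\mathfrak{X}_t,\mathfrak{L}_t)=H^{1}(\mathfrak{X}_t,f_t^{*}\mathcal{O}(1))=0,
\end{equation*}
since $H^{1}(\mathbb{P}^{n},\mathcal{O}(1))=0$ and $H^{0}(\mathbb{P}^{n},\Omega^{1}_{\mathbb{P}^{n}}(1))=0$; then cohomology and base change \cite{MR0463470} shows $\pi_*\mathfrak{L}$ is locally free near $t$ with bijective fibrewise evaluation, so $\pi^{*}\pi_*\mathfrak{L}\to\mathfrak{L}$ is surjective near $t$ and nearby fibres lie in $V_{\mathrm{reg}}$. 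Once openness is established this way, the rest of your plan (disk argument at nef points of Picard rank one, then connectedness) does run essentially as in the paper; without it, the punctured disks feeding Lemma \ref{nakayama_freeness} never materialize and the argument does not start.
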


\begin{proof}
Let $t$ be a point of $V_{\mathrm{reg}}$ 
and we denote by $\mathfrak{X}_{t}$ the fibre at $ t $
 and by $\mathfrak{L}_{t}$
the restriction of $\mathfrak{L}$ 
to $ \mathfrak{X}_{t} $.
First we prove that $ V_{\mathrm{reg}} $ is open.
By the difinition of $ V_{\mathrm{reg}} $ in Theorem \ref{main_result}, 
the linear system $|\mathfrak{L}_{t}|$
defines a Lagrangian fibration 
$f_{t} : \mathfrak{X}_{t} \to \mathbb{P}^{1/2\dim \mathfrak{X}_{t}}$.
Let us consider the Relay spectral sequence
\[
 E^{p,q}_{2} = H^{p}(\mathbb{P}^{1/2 \dim \mathfrak{X}_{t}}, R^{q}(f_{t})_{*}f_{t}^{*}\mathcal{O} (1)
  ))
 \Longrightarrow
 E^{p+q}= H^{p+q}(\mathfrak{X}_{t}, (f_{t})^{*}\mathcal{O}(1)).
\]
The edge sequence of the above spectral sequence is
\[
0 \to H^{1}(\mathbb{P}^{1/2\dim \mathfrak{X_{t}}}, \mathcal{O}(1)) \to 
H^1(\mathfrak{X}_{t},f_{t}^{*}\mathcal{O}(1))
  \to H^{0}(\mathbb{P}^{1/2\dim \mathfrak{X}_{t}}, 
  R^{1}(f_{t})_{*}\mathcal{O}_{\mathfrak{X}_{t}}\otimes \mathcal{O}(1))
\]
%
%
By \cite[Theorem 1.2]{higher-direct-image},
\[
 R^{1}(f_{t})_{*}\mathcal{O}_{\mathfrak{X}_{t}} \cong 
 \Omega^{1}_{\mathbb{P}^{1/2 \dim \mathfrak{X}_{t}}}.
\]
Since $H^{1}(\mathbb{P}^{1/2\dim \mathfrak{X}_{t}}, 
\Omega_{\mathbb{P}^{1/2\dim \mathfrak{X}_{t}}} (1))=0$,
we have $H^{1}(\mathfrak{X}_{t},f_{t}^{*}\mathcal{O}(1))= 
H^{1}(\mathfrak{X}_{t},\mathfrak{L}_{t}) = 
0$.
By \cite[Corollary III. 3.9]{MR0463470},
$\pi_{*}\mathfrak{L}$ is locally free in an open neighbourhood of $ t $
and the morphism
\[
 \pi_{*}\mathfrak{L}\otimes k(t) \to H^{0}(\mathfrak{X}_{t}, \mathfrak{L}_{t})
\]
is bijective.
Combining the fact that $ \mathfrak{L}_{t} $ is free,
\[
 \pi^{*}\pi_{*}\mathfrak{L} \to \mathfrak{L}
\]
is surjective
over an open neighborhood of $ t $. This implies that $ V_{\mathrm{reg}} $ is open.

Next we prove that 
$ \mathrm{Def}(X,L)\setminus V_{\mathrm{reg}} $ is contained in a union
of countably hypersurfaces of $ \mathrm{Def}(X,L) $.
Since a union of real codimension two subsets cannot separate two non-empty open 
subsets, this implies that $V_{\mathrm{reg}}$ is dense. 
 Let $t'$ be a point of the closure of $V_{\mathrm{reg}}$ such that 
 $\dim H^{1,1}(\mathfrak{X}_{t'} ,\mathbb{Q}) = 1$,
 where $ \mathfrak{X}_{t'} $ is the fibre at $ t' $. We denote by
 $ \mathfrak{L}_{t'} $ the restriction of $ \mathfrak{L} $ to $ \mathfrak{X}_{t'} $.
 By the definition of the Beauville-Bogomolov quadratic in \cite[page 772]{Beauville},
 the function
 \[
  \mathrm{Def}(X,L)\ni t \mapsto q_{\mathfrak{X}_{t}}(\mathfrak{L}_{t}) \in \mathbb{Z}
 \]
 is a constant function, where $ q_{\mathfrak{X}_{t}} $ is the Beauville-Bogomolov
 quadratic form on $ H^{2}(\mathfrak{X}_{t},\mathbb{C}) $.
 Thus $ q_{\mathfrak{X}_{t'}} (\mathfrak{L}_{t'}) = 0$.
 Since $ H^{1,1}(\mathfrak{X}_{t'},\mathbb{Q}) $ is spanned by $ \mathfrak{L}_{t'} $,
 $ \mathfrak{X}_{t'} $ is not projective by \cite[Theorem 2]{MR1965365}. Thus
  $ \mathfrak{L}_{t'} $ is nef by Lemma \ref{nefness_of_non_projective}.
We choose a small disk $S$ in $\mathrm{Def}(X,L)$ such that $ t' \in S $ and
$S\setminus \{t'\} \subset V_{\mathrm{reg}}$. We also
consider the restriction
family $\pi_S :\mathfrak{X}_L\times_{\mathrm{Def}(X,L)}S \to S$. 
Then $\mathfrak{L}^{\otimes k}_{t"}$ is free for  every point $t"$ of $S\setminus \{t'\}$
and $ k \ge 1 $, where $ \mathfrak{L}_{t"} $ is the restriction of $ \mathfrak{L} $ to
the fibre $ \mathfrak{X}_{t"} $ at $ t" $.
By Lemma \ref{nakayama_freeness},
\( (\pi_{S})_{*} \mathfrak{L}^{\otimes k}  
 \)
 is locally free and
the morphism
\[
 (\pi_{S})_{*}\mathfrak{L}^{\otimes k}\otimes k(t') \to 
 H^{0}(\mathfrak{X}_{t'}, \mathfrak{L}_{t'}^{\otimes k})
\]
is bijective for every $ k \ge 1$. By Lemma \ref{numerical_property},
$t' \in V_{\mathrm{reg}}$. 
Let $ W $ be the subset  of $ \mathrm{Def}(X,L) $ defined by
\[
W := \{
 t \in \mathrm{Def}(X,L); \dim H^{1,1}(\mathfrak{X}_{t},\mathbb{Q}) \ge 2
\}.
\]
By the above argument,
 $ \mathrm{Def}(X,L)\setminus V_{\mathrm{reg}} \subset W$.
By \cite[(1.14)]{MR1664696}, $ W $
is contained in a union of countably hypersurfaces of $ \mathrm{Def}(X,L)$
and we are done.
%
\end{proof}

We give a proof of Theorem \ref{main_result}.

\begin{proof}[Proof of Theorem \ref{main_result}]
The proof consists of three parts. We start with proving the following Claim.
\begin{claim}\label{first_step_of_proof_of_Theorem}
If $ V \ne \emptyset$, then $ V_{\mathrm{reg}} \ne \emptyset $.
\end{claim}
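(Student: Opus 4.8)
The plan is to transport the existence of a genuine Lagrangian fibration from an auxiliary deformation space back to $\mathrm{Def}(X,L)$ by means of the birational correspondence of Proposition \ref{Birationarity-of-Kuranishi}. First I would fix a point $t \in V$. By Definition \ref{rational_Lag_def} there is a bimeromorphic map $\phi : \mathfrak{X}_{t} \dashrightarrow X'$ onto an irreducible symplectic manifold $X'$ such that, writing $L' := \phi_{*}\mathfrak{L}_{t}$, the linear system $|L'|$ defines an honest Lagrangian fibration $X' \to \mathbb{P}^{1/2\dim X'}$. In particular the reference point of $\mathrm{Def}(X',L')$ lies in the locus $V_{\mathrm{reg}}(X',L')$ of genuine Lagrangian fibrations attached to $(X',L')$, so that $V_{\mathrm{reg}}(X',L') \ne \emptyset$. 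Applying Lemma \ref{nef_locus} to the pair $(X',L')$ then shows that $V_{\mathrm{reg}}(X',L')$ is dense and open in $\mathrm{Def}(X',L')$.

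Next I would set up the birational correspondence. Since the Beauville-Bogomolov square is constant in a family, $q_{\mathfrak{X}_{t}}(\mathfrak{L}_{t}) = q_{X}(L) = 0 \ge 0$, which is exactly the hypothesis of Proposition \ref{Birationarity-of-Kuranishi} for $\phi : \mathfrak{X}_{t} \dashrightarrow X'$; here I identify a neighbourhood of $t$ in $\mathrm{Def}(X,L)$ with the universal deformation of $(\mathfrak{X}_{t},\mathfrak{L}_{t})$ via universality. The proposition supplies open subsets $U_{L}$ around $t$ in $\mathrm{Def}(X,L)$ and $U'_{L'}$ in $\mathrm{Def}(X',L')$, an isomorphism $\varphi : U_{L} \cong U'_{L'}$ identifying the restricted families, and for each $s \in U_{L}$ an isomorphism $\phi_{s} : \mathfrak{X}_{s} \cong \mathfrak{X}'_{\varphi(s)}$ carrying $\mathfrak{L}_{s}$ to $\mathfrak{L}'_{\varphi(s)}$.

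Finally I would intersect the two loci. As $U'_{L'}$ is non-empty and open and $V_{\mathrm{reg}}(X',L')$ is dense, there is a point $s' \in U'_{L'} \cap V_{\mathrm{reg}}(X',L')$. Put $s := \varphi^{-1}(s') \in U_{L} \subset \mathrm{Def}(X,L)$. Pulling back the Lagrangian fibration defined by $|\mathfrak{L}'_{s'}|$ along the isomorphism $\phi_{s}$, which matches $\mathfrak{L}_{s}$ with $\mathfrak{L}'_{s'}$, produces a Lagrangian fibration over projective space defined by $|\mathfrak{L}_{s}|$. Hence $s \in V_{\mathrm{reg}}$ and $V_{\mathrm{reg}} \ne \emptyset$.

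The step I expect to be the main obstacle is the bookkeeping at the junction of the two deformation spaces: one must check that $X'$ is genuinely irreducible symplectic so that Proposition \ref{Birationarity-of-Kuranishi} applies, that the germ of $\mathrm{Def}(X,L)$ at $t$ is indeed the universal deformation of $(\mathfrak{X}_{t},\mathfrak{L}_{t})$, and above all that $\phi_{s}$ transports the entire morphism to projective space and not merely the line bundle, so that a genuine Lagrangian fibration on $\mathfrak{X}'_{\varphi(s)}$ yields a genuine one on $\mathfrak{X}_{s}$.
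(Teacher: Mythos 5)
Your proposal is correct and follows essentially the same route as the paper: pass to the pair $(X',L')$ where the fibration is a morphism, note its reference point lies in the Lagrangian locus $V'_{\mathrm{reg}}$, apply Lemma \ref{nef_locus} to make $V'_{\mathrm{reg}}$ dense open, and pull a point of $U'_{L'}\cap V'_{\mathrm{reg}}$ back through the correspondence of Proposition \ref{Birationarity-of-Kuranishi} into $V_{\mathrm{reg}}$. The only cosmetic difference is that the paper normalizes the reference point $o$ to lie in $V$ rather than working at an arbitrary $t\in V$ and invoking universality at $t$, which is the same identification you make explicitly.
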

\begin{proof}
We may assume that the reference point $ o $ of $ \mathrm{Def}(X,L) $ is contained
in $ V $. 
By Definition \ref{rational_Lag_def},
there exists a birational map
$\phi : X \dasharrow X'$
such that the linear system $|\phi_{*}L|$ 
defines a Lagrangian fibration $X' \to 
\mathbb{P}^{1/2\dim X}$. 
Let $ L' := \phi_{*}L $ and
$(\mathfrak{X}'_{L'}, \mathfrak{L}')$ be the universal family of
deformations of
the pair $ (X',L') $.
Let $ V'_{\mathrm{reg}} $ be the locus of Lagrangian fibration
of $ \mathrm{Def}(X',L') $. Then the reference point $ o' $  of 
$ \mathrm{Def}(X',L') $ is contained in $ V'_{\mathrm{reg}} $.
By Lemma \ref{nef_locus}, $V'_{\mathrm{reg}}$ is a dense open set of
$ \mathrm{Def}(X',L') $.
By Proposition \ref{Birationarity-of-Kuranishi},
we also have  dense open sets 
$U'_{L'}$ 
of $\mathrm{Def}(X',L')$
and $ U_{L} $ of $ \mathrm{Def}(X,L) $ which satisfy the
following diagram:
\[
 \xymatrix{
 \mathfrak{X}_{L}\times_{\mathrm{Def}(X,L)} U_{L} \ar[r]^{\cong} \ar[d] &
 \mathfrak{X}'_{L'}\times_{\mathrm{Def}(X',L')} U_{L'}  \ar[d] \\
 U_{L} \ar[r]_{\cong}^{\varphi} & U'_{L'}
 }
\]
By the assertion (2) of Proposition \ref{Birationarity-of-Kuranishi},
$ \varphi^{-1} ( U'_{L'} \cap V'_{\mathrm{reg}} ) \subset V_{\mathrm{reg}}$. 
Since $ U'_{L'} \cap V'_{\mathrm{reg}} \ne \emptyset$, we obtain
$ V_{\mathrm{reg}} \ne \emptyset$.
\end{proof}
By Claim \ref{first_step_of_proof_of_Theorem} and Lemma \ref{nef_locus},
$ \mathrm{Def}(X,L) $ coinsides with the closure of $ V_{\mathrm{reg}} $ 
under the assumption that $ V \ne \emptyset $. 
\begin{claim}\label{the_second_step_of_the_Proof}
Assume that the reference point $ o $ of $ \mathrm{Def}(X,L) $
 is contained in the closure of $ V_{\mathrm{reg}} $ and $ L $ is nef. Then
 $ o \in V_{\mathrm{reg}} $.
\end{claim}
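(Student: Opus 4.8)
The plan is to reduce the assertion to the numerical criterion of Lemma~\ref{numerical_property}, by transporting the equality of dimensions of spaces of sections from the general (Lagrangian) fibres of a one-parameter family down to the central fibre over $o$, using the freeness and base-change statement of Lemma~\ref{nakayama_freeness}.

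First I would note that the hypothesis $o\in\overline{V_{\mathrm{reg}}}$ forces $V_{\mathrm{reg}}\ne\emptyset$, so Lemma~\ref{nef_locus} applies and $V_{\mathrm{reg}}$ is a dense open subset whose complement is contained in a countable union of hypersurfaces of $\mathrm{Def}(X,L)$. I would then choose a small disk $S\subset\mathrm{Def}(X,L)$ with $o\in S$ and $S\setminus\{o\}\subset V_{\mathrm{reg}}$; such a disk exists because a generic one-dimensional disk through $o$, after shrinking, meets the countably many hypersurfaces comprising $\mathrm{Def}(X,L)\setminus V_{\mathrm{reg}}$ only at $o$. This is the same general-position choice already used in the proofs of Proposition~\ref{Birationarity-of-Kuranishi} and Lemma~\ref{nef_locus}. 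Write $\pi_S:\mathfrak{X}_L\times_{\mathrm{Def}(X,L)}S\to S$ for the restricted family and $\mathfrak{L}_S$ for the restriction of $\mathfrak{L}$.

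Next I would check the three hypotheses of Lemma~\ref{nakayama_freeness}. Condition~(1) holds since every fibre is a deformation of the symplectic manifold $X$, hence has trivial canonical bundle; condition~(2) holds because each $t\in S\setminus\{o\}\subset V_{\mathrm{reg}}$ has $\mathfrak{L}_t$ defining a Lagrangian fibration, so $\mathfrak{L}_t$ is semiample; and condition~(3) is precisely the assumption that $\mathfrak{L}_o\cong L$ is nef. Lemma~\ref{nakayama_freeness} then gives that $(\pi_S)_*\mathfrak{L}_S^{\otimes k}$ is locally free for every $k\ge 1$ and that the base-change map $(\pi_S)_*\mathfrak{L}_S^{\otimes k}\otimes k(o)\to H^0(\mathfrak{X}_o,\mathfrak{L}_o^{\otimes k})$ is an isomorphism. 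In particular $\dim H^0(\mathfrak{X}_o,\mathfrak{L}_o^{\otimes k})$ equals the rank of $(\pi_S)_*\mathfrak{L}_S^{\otimes k}$, which is constant on the connected disk $S$.

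Finally I would evaluate this constant rank at a general point $t\in S\setminus\{o\}$, where base change again holds and $t\in V_{\mathrm{reg}}$: there $\mathfrak{L}_t$ defines a Lagrangian fibration over $\mathbb{P}^{1/2\dim X}$, so by Definition~\ref{rational_Lag_def} one has $\dim H^0(\mathfrak{X}_t,\mathfrak{L}_t^{\otimes k})=\dim H^0(\mathbb{P}^{1/2\dim X},\mathcal{O}(k))$ for all $k\ge 1$. Hence the common rank equals $\dim H^0(\mathbb{P}^{1/2\dim X},\mathcal{O}(k))$, and therefore $\dim H^0(\mathfrak{X}_o,\mathfrak{L}_o^{\otimes k})=\dim H^0(\mathbb{P}^{1/2\dim X},\mathcal{O}(k))$ for every $k\ge 1$. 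Since $\mathfrak{L}_o\cong L$ is nef and satisfies (\ref{global_section}), Lemma~\ref{numerical_property} shows that $|L|$ defines a Lagrangian fibration over the projective space, i.e. $o\in V_{\mathrm{reg}}$. The only delicate point is the selection of the disk $S$ meeting the bad locus only at $o$; the remainder is a direct application of the freeness/base-change and numerical criteria already established.
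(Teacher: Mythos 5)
Your proposal is correct and follows essentially the same route as the paper's own proof: choose a disk $S$ through $o$ with $S\setminus\{o\}\subset V_{\mathrm{reg}}$, apply Lemma \ref{nakayama_freeness} to get local freeness of $(\pi_S)_*\mathfrak{L}_S^{\otimes k}$ together with base change, and conclude $o\in V_{\mathrm{reg}}$ via the numerical criterion of Lemma \ref{numerical_property}. You merely make explicit two points the paper leaves implicit --- the verification of the three hypotheses of Lemma \ref{nakayama_freeness} and the evaluation of the constant rank at a point $t\in S\setminus\{o\}$ to obtain the dimension equality (\ref{global_section}) --- which is a faithful elaboration, not a different argument.
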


\begin{proof}
By the assumption that $ o \in \overline{V}_{\mathrm{reg}} $, we choose a small disk $ S $ in
$ \mathrm{Def}(X,L) $
which has the following properties:
\begin{itemize}
\item[(1)] $ o \in S $.
\item[(2)] $ S\setminus \{o\} \subset V_{\mathrm{reg}} $.
\end{itemize}
Let $ \pi_{S} : \mathfrak{X}_{L}\times_{\mathrm{De}(X,L)}S \to S $ be
the restriction family
and $ \mathfrak{L}_{S} $ the restriction of the universal bundle $ \mathfrak{L} $ to 
$ \mathfrak{X}_{L} \times_{\mathrm{Def}(X,L)} S $. Then 
$ \pi_{S} : \mathfrak{X}_{L}\times_{\mathrm{Def}(X,L)} S \to S$ 
and $ \mathfrak{L}_{S} $
satisfy all assumptions of Lemma \ref{nakayama_freeness}. Hense
$ \pi_{*} \mathfrak{L}_{S}^{\otimes k}$ are locally free and
the morphisms
\[
  \pi_{*} \mathfrak{L}_{S}^{\otimes k}\otimes k(s) \to 
  H^{0}(\mathfrak{X}_{s}, \mathfrak{L}_{s}^{\otimes k})
\]
are isomorphic for all $ k \ge 0 $ and all points $ s \in S $.
This implies that the pair $ (X,L) $ satisfies the all assumptions of Lemma \ref{numerical_property}
and we obtain
$ o \in V_{\mathrm{reg}} $.
\end{proof}

\begin{claim}\label{the_third_step_of_the_Proof}
Assume that the reference point $ o $ of $ \mathrm{Def}(X,L) $
 is contained in the closure of $ V_{\mathrm{reg}} $ and $ c_{1}(L) $ belongs to
 the birational K\"ahler cone. Then
 $ o \in V $.
\end{claim}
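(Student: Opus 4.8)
The plan is to reduce to the nef case settled in Claim \ref{the_second_step_of_the_Proof} by passing to a suitable birational model. Since $q_{X}(L)=0$, the class $c_{1}(L)$ lies on the boundary of the positive cone, so it cannot sit in the open K\"ahler cone of $X$ itself; the hypothesis that $c_{1}(L)$ belongs to the birational K\"ahler cone of \cite[Definition 4.1]{MR1992275} instead furnishes a birational map $\phi : X \dasharrow X'$ onto an irreducible symplectic manifold $X'$ for which $\phi_{*}c_{1}(L)$ lies in the closure of the K\"ahler cone of $X'$; equivalently, $L' := \phi_{*}L$ is nef on $X'$. By Lemma \ref{isometry} the map $\phi_{*}$ preserves the Beauville-Bogomolov form, so $q_{X'}(L')=q_{X}(L)=0\ge 0$, and hence the hypotheses of Proposition \ref{Birationarity-of-Kuranishi} are satisfied by $(X,L)$ and $(X',L')$.

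Next I would compare the two deformation spaces of pairs. Write $o'$ for the reference point of $\mathrm{Def}(X',L')$ and $V'_{\mathrm{reg}}$ for its locus of Lagrangian fibrations. Proposition \ref{Birationarity-of-Kuranishi} applied to $(X,L)$ and $(X',L')$ yields dense open subsets $U_{L}\subset\mathrm{Def}(X,L)$ and $U'_{L'}\subset\mathrm{Def}(X',L')$, each with complement contained in a countable union of hypersurfaces, together with an isomorphism $\varphi : U_{L}\cong U'_{L'}$ under which the restricted families and their universal bundles correspond. For $s\in U_{L}$ the induced fibre isomorphism $\phi_{s}:\mathfrak{X}_{L,s}\cong\mathfrak{X}'_{L',\varphi(s)}$ satisfies $(\phi_{s})_{*}\mathfrak{L}_{s}\cong\mathfrak{L}'_{\varphi(s)}$, so the pairs at $s$ and $\varphi(s)$ are isomorphic and $\mathfrak{L}_{s}$ defines a Lagrangian fibration precisely when $\mathfrak{L}'_{\varphi(s)}$ does. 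Thus $\varphi$ identifies $V_{\mathrm{reg}}\cap U_{L}$ with $V'_{\mathrm{reg}}\cap U'_{L'}$.

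The crux is to deduce $o'\in\overline{V'_{\mathrm{reg}}}$. From $o\in\overline{V_{\mathrm{reg}}}$ we have $V_{\mathrm{reg}}\ne\emptyset$, so Lemma \ref{nef_locus} makes $V_{\mathrm{reg}}$ dense and open in $\mathrm{Def}(X,L)$; intersecting with the dense open $U_{L}$ leaves $V_{\mathrm{reg}}\cap U_{L}$ dense, and transporting through $\varphi$ shows $V'_{\mathrm{reg}}\cap U'_{L'}$ is dense in $U'_{L'}$, hence in $\mathrm{Def}(X',L')$. In particular $V'_{\mathrm{reg}}\ne\emptyset$, so a second application of Lemma \ref{nef_locus} gives that $V'_{\mathrm{reg}}$ is dense open, whence $\overline{V'_{\mathrm{reg}}}=\mathrm{Def}(X',L')\ni o'$. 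I expect this to be the main obstacle: the reference points $o$ and $o'$ are exactly the high-Picard-rank points that may fail to lie in $U_{L}$ or $U'_{L'}$, so $o$ cannot be carried to $o'$ directly by $\varphi$. The argument is therefore forced to pass through the density of the Lagrangian locus rather than through a pointwise correspondence at the reference point, and this is precisely where the countable-hypersurface bounds on all of the relevant complements are needed.

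Finally, since $o'\in\overline{V'_{\mathrm{reg}}}$ and $L'$ is nef, Claim \ref{the_second_step_of_the_Proof} applied to $(X',L')$ gives $o'\in V'_{\mathrm{reg}}$; that is, $L'=\phi_{*}L$ defines a genuine Lagrangian fibration $X'\to\mathbb{P}^{1/2\dim X}$ over the projective space. By Definition \ref{rational_Lag_def} the birational map $\phi$ then exhibits $L$ as defining a rational Lagrangian fibration on $X$, so $o\in V$, completing the claim.
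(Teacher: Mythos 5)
Your argument follows the paper's strategy almost verbatim in its second half: the identification of $V_{\mathrm{reg}}\cap U_{L}$ with $V'_{\mathrm{reg}}\cap U'_{L'}$ via Proposition \ref{Birationarity-of-Kuranishi}, the density transport through $\varphi$ to conclude $o'\in\overline{V'_{\mathrm{reg}}}$ (using Lemma \ref{nef_locus} twice), and the final application of Claim \ref{the_second_step_of_the_Proof} to $(X',L')$ are exactly the paper's steps. The gap is concentrated in your first step: the production of a birational model $\phi\colon X\dasharrow X'$ on which $L'=\phi_{*}L$ is nef.

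You assert that the hypothesis that $c_{1}(L)$ lies in the birational K\"ahler cone ``furnishes'' such a $\phi$. This does not follow from \cite[Definition 4.1]{MR1992275}. By that definition the birational K\"ahler cone is the union $\bigcup_{\phi}\phi^{*}\mathcal{K}_{X'}$ of pullbacks of the \emph{open} K\"ahler cones of birational models; since $q_{X}(L)=0$, the class $c_{1}(L)$ can lie only in the \emph{closure} of this union (as you yourself observe). But the closure of an infinite union of cones is in general strictly larger than $\bigcup_{\phi}\phi^{*}\overline{\mathcal{K}}_{X'}$: there may be infinitely many birational models (chambers), and $c_{1}(L)$ may be approached only through classes coming from infinitely many distinct models, without lying in the closure of any single chamber. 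Converting ``isotropic class in the closure of the birational K\"ahler cone'' into ``nef on one birational model'' is a genuine theorem, and it is precisely here that the paper does work your proof omits: it first reduces to the case that $X$ is projective via Lemma \ref{nefness_of_non_projective} (if $X$ is not projective, $L$ is already nef and Claim \ref{the_second_step_of_the_Proof} finishes the proof directly); it then uses the hypothesis $o\in\overline{V}_{\mathrm{reg}}$ together with upper semicontinuity of $s\mapsto \dim H^{0}(\mathfrak{X}_{s},\mathfrak{L}_{s})$ on the disk $S$ to show that $L$ is \emph{effective}; and only then does it invoke \cite[Theorem 1.2]{0907.5311} to produce $\phi$ with $\phi_{*}L$ nef. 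Note that your proof uses the hypothesis $o\in\overline{V}_{\mathrm{reg}}$ only once, for the density argument, whereas the paper needs it twice: the effectivity of $L$, which your argument never establishes, is an essential input to the cited theorem. Without this step your reduction to Claim \ref{the_second_step_of_the_Proof} has no starting point.
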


\begin{proof}
We remark that
$ X $ is projective by Lemma \ref{nefness_of_non_projective}.
We consider the same restriction family 
$ \pi : \mathfrak{X}_{L}\times_{\mathrm{Def}(X,L)}  S \to S$
in the proof of Claim \ref{the_second_step_of_the_Proof}.
By the upper semicontinuity of the function
\[
 s \in S \mapsto \dim H^{0}(\mathfrak{X}_{s},\mathfrak{L}_{s}),
\]
$ \mathfrak{L}_{o} = L $ is effective. 
By \cite[Theorem 1.2]{0907.5311}, 
there exists a birational map
\( \phi: X \dasharrow X' \) such that $ L' $ is nef, where
$ L' = \phi_{*}L$. 
By Proposition \ref{Birationarity-of-Kuranishi},
we have dense open sets 
$U'_{L'}$ 
of $\mathrm{Def}(X',L')$
and $ U_{L} $ of $ \mathrm{Def}(X,L) $ which satisfy the
following diagram:
\[
 \xymatrix{
 \mathfrak{X}_{L}\times_{\mathrm{Def}(X,L)} U_{L} \ar[r]^{\cong} \ar[d] &
 \mathfrak{X}'_{L'}\times_{\mathrm{Def}(X',L')} U_{L'}  \ar[d] \\
 U_{L} \ar[r]_{ \cong}^{\varphi} & U'_{L'}
 }
\]
Let $ V'_{\mathrm{reg}}$ be the locus of Lagrangian fibrations of 
$ \mathrm{Def}(X',L') $. Then $ V'_{\mathrm{reg}} \ne \emptyset$ because
the image  $\varphi( V_{\mathrm{reg}} \cap U_{L})$ is contained
in $ V'_{\mathrm{reg}} $ by Proposition \ref{Birationarity-of-Kuranishi} (2).
By Lemma \ref{nef_locus}, $ V'_{\mathrm{reg}} $ is dense. Hence the reference
point $ o' $ of $ \mathrm{Def}(X',L') $ is contained in the closure of 
$ V'_{\mathrm{reg}} $. By Claim \ref{the_second_step_of_the_Proof},
 $ o' \in V'_{\mathrm{reg}} $. This implies that $ o\in V $.
\end{proof}

We finish the proof of Theorem \ref{main_result}. If $ V \ne \emptyset $, $ V_{\mathrm{reg}} $
is open and dense in $ \mathrm{Def}(X,L) $ by Claim \ref{first_step_of_proof_of_Theorem}.
Thus every point $ s $ of $ \mathrm{Def}(X,L)_{\mathrm{mov}} $ 
is contained in the closure of $ V_{\mathrm{reg}} $.
Then $ s \in V $ by Claim \ref{the_third_step_of_the_Proof} and we are done.
\end{proof}

\begin{proof}[Proof of Corollary \ref{application}]
 We use the same notation of Theorem \ref{main_result} and Corollary \ref{application}.
 We also define 
 the subset $ \Lambda $ of $ \mathrm{Def}(X) $ by
\begin{eqnarray*}
   \Lambda := \{
   s \in \mathrm{Def}(X) ; &\mathfrak{X}_{s}& \mbox{is isomorphic to
   the $ n $-pointed
   Hilbert scheme of $ K3 $ }\\ 
   && \mbox{or a generalized Kummer variety }\}
\end{eqnarray*}
  If  $ \Lambda \cap \mathrm{Def}(X,L)_{\mathrm{mov}} \ne \emptyset$,
  then the restriction of the universal bundle $ \mathfrak{L}_{s} $ to the
  fibre $ \mathfrak{X}_{s} $ at $ s \in \Lambda\cap \mathrm{Def}(X,L)_{\mathrm{mov}} $
 defines a rational Lagrangian fibration by \cite[Conjecture 1.4, Theorem 1.5]{1301.6968}, \cite[Theorem 1.3]{1301.6584} and
 \cite[Proposition 3.36]{1206.4838}.
 First we prove that $ \Lambda \cap \mathrm{Def}(X,L) $ is dense in $ \mathrm{Def}(X,L) $.
 The subset $ \Lambda $ 
 is dense in $ \mathrm{Def}(X) $ by
  \cite[Theorem 1.1, Theorem 4.1]{2012arXiv1201.0031M}. Moreover,
 by \cite[Th\'eor\`em 6, 7]{Beauville}, each irreducible component of
 $ \Lambda $ 
 forms a smooth hypersurface of $ \mathrm{Def}(X) $. Therefore 
 $ \Lambda \cap \mathrm{Def}(X,L)$ is dense in $ \mathrm{Def}(X,L) $.
 Next we will prove the following Lemma.
 \begin{lemma}\label{density_of_movable}
 Under the same assumptions and notation of Theorem \ref{main_result},
 the closure of $\mathrm{Def}(X,L)\setminus \mathrm{Def}(X,L)_{\mathrm{mov}} $ is a proper closed subset
 of $ \mathrm{Def}(X,L) $.
 \end{lemma}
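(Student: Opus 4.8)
The statement is equivalent to asserting that $\mathrm{Def}(X,L)_{\mathrm{mov}}$ has nonempty interior. First I would record that the movable locus is at least dense. Because $\mathrm{Def}(X,L)$ is simply connected the local system $R^2\pi_*\mathbb{Z}$ is trivial, so under the marking $c_1(\mathfrak{L}_t)$ is the fixed isotropic class $\lambda$. If $\mathfrak{X}_t$ is not projective then $\mathfrak{L}_t$ is nef by Lemma \ref{nefness_of_non_projective}, hence $\lambda$ lies in the closure of the K\"ahler cone and so in the closure of the birational K\"ahler cone; thus every non-projective fibre lies in $\mathrm{Def}(X,L)_{\mathrm{mov}}$. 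Since a fibre with $\dim H^{1,1}(\mathfrak{X}_t,\mathbb{Q})=1$ is non-projective by \cite[Theorem 2]{MR1965365}, the non-movable locus is contained in $W=\{t:\dim H^{1,1}(\mathfrak{X}_t,\mathbb{Q})\ge 2\}$, which by \cite[(1.14)]{MR1664696} lies in a countable union of hypersurfaces. This gives density but not yet the required empty interior of the complement.

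Next I would dispose of the case in which a Lagrangian fibre exists. If $V_{\mathrm{reg}}\ne\emptyset$, then $V_{\mathrm{reg}}$ is open and dense by Lemma \ref{nef_locus}; as a Lagrangian fibration is a fortiori a rational Lagrangian fibration, and such a fibration forces $c_1(\mathfrak{L}_t)$ into the birational K\"ahler cone, we get $V_{\mathrm{reg}}\subseteq V\subseteq\mathrm{Def}(X,L)_{\mathrm{mov}}$. Hence the non-movable locus is contained in the closed nowhere dense set $\mathrm{Def}(X,L)\setminus V_{\mathrm{reg}}$, whose closure is proper, and we are done in this case. It therefore remains to argue directly with the cone structure, which also covers $V_{\mathrm{reg}}=\emptyset$.

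For the general argument I would use the description, due to Huybrechts and Boucksom, that $\lambda\in\overline{\mathcal{BK}_{\mathfrak{X}_t}}$ fails exactly when there is a prime exceptional divisor $E$ on $\mathfrak{X}_t$ --- effective, irreducible, with $q([E])<0$ --- satisfying $q(\lambda,[E])<0$ (note that $\lambda$ stays in the closed positive cone throughout, being nef on the dense non-projective locus). By Markman's boundedness of prime exceptional classes, each obstructing $m=[E]$ has $-C\le q(m)<0$ with $C$ depending only on the deformation type, and $E$ deforms over the Hodge locus $H_m=\{t:q(\omega_t,m)=0\}$, which is a proper hypersurface because $q(\lambda,m)<0$ forces $m\notin\mathbb{Q}\lambda$. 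Thus the non-movable locus lies in $\bigcup_m H_m$, and the whole statement reduces to proving that this family of hypersurfaces is locally finite.

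The local finiteness is the crux and the step I expect to be hardest. Since the obstructing classes have bounded square, any infinite sequence of them accumulates only onto $q$-isotropic directions, so the walls $H_m$ can pile up toward the ray of $\lambda$ --- the familiar accumulation of movable-cone walls at the isotropic boundary of the positive cone. To control it I would exploit that $q(\lambda,m)$ is a negative integer, hence bounded away from $0$, which together with $-C\le q(m)<0$ should confine the limiting isotropic directions to a proper closed subset $Z$; away from $Z$ only finitely many $H_m$ meet any compact set. Granting this, $Z\cup\bigcup_m H_m$ is a proper closed subset whose complement is open, dense, and contained in $\mathrm{Def}(X,L)_{\mathrm{mov}}$, which completes the proof. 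I would extract the boundedness and the confinement of the accumulation from the theory of monodromy-reflective (MBM) classes of Markman and of Amerik--Verbitsky.
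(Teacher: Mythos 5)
Your steps (1) and (2) are correct but do not reach the essential case: Lemma \ref{density_of_movable} is invoked in the proof of Corollary \ref{application} precisely in a situation where one does not yet know that $V_{\mathrm{reg}}\ne\emptyset$, so everything rests on your third part, and there the key claim is both unproven (you say ``granting this'') and, as formulated, false. The family of Hodge loci $H_m$ attached to \emph{all} integral classes $m$ with $-C\le q(m)<0$ and $q(\lambda,m)<0$ is dense in $\mathrm{Def}(X,L)$, so no proper closed subset $Z$ can absorb its accumulation. To see this, identify $\mathrm{Def}(X,L)$ by local Torelli with an open piece of $\{[\sigma]:\,q(\sigma)=0,\ q(\sigma,\bar\sigma)>0,\ q(\sigma,\lambda)=0\}$, choose a rational isotropic class $\mu$ with $q(\lambda,\mu)=1$, and put $V=\{\lambda,\mu\}^{\perp}$, so that periods are written $\sigma=s\lambda+w$ with $w\in V\otimes\mathbb{C}$, $q(w)=0$, $q(w,\bar w)>0$. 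For each lattice vector $v\in V$ one can pick an integer $a$ so that $m_v=a\lambda-\mu+v$ meets the numerical constraints (e.g.\ when squares $-2$ are allowed, $a=(q(v)+2)/2$ works because the lattice is even); then $q(\lambda,m_v)=-1$ and $H_{m_v}=\{s=q(w,v)\}$, a graph over the period domain of $V$. Whenever the real $2$-plane spanned by $\mathrm{Re}\,w_0$ and $\mathrm{Im}\,w_0$ contains no nonzero rational vector --- which holds for a dense set of $w_0$ --- the subgroup $\{q(w_0,v):v\in V_{\mathbb{Z}}\}$ is dense in $\mathbb{C}$, so these walls pass arbitrarily close to every point of the line $\{w_0\}\times\mathbb{C}$. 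Hence the closure of $\bigcup_m H_m$ is all of $\mathrm{Def}(X,L)$; the constraints you hoped would confine the accumulation ($q(\lambda,m)$ a negative integer, $q(m)$ bounded) are already satisfied in this example. What your reduction discards is exactly the input that makes the lemma true: on each wall one must determine which of $\pm m$ is actually effective (prime exceptional), and only the walls whose effective sign pairs negatively with $\lambda$ meet the non-movable locus; that subfamily is sparse, but proving so is essentially the lemma itself.

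The paper avoids wall analysis entirely and argues by contradiction. If the non-movable locus were dense, then at each of its points $s$ the divisorial Zariski decomposition $\mathfrak{L}_s=P_s+N_s$ has $N_s\ne 0$, and $0=q(\mathfrak{L}_s)=q(P_s)+q(N_s)$ forces $q(P_s)>0$, so $P_s$, and hence $\mathfrak{L}_s$, is big; by upper semicontinuity of $s\mapsto\dim H^{0}(\mathfrak{X}_s,\mathfrak{L}_s^{n})$ this bigness propagates to every fibre; but at a very general point $t$ (with $\dim H^{1,1}(\mathfrak{X}_t,\mathbb{Q})=1$) the fibre is non-projective, $\mathfrak{L}_t$ is nef by Lemma \ref{nefness_of_non_projective}, and Riemann--Roch together with $q(\mathfrak{L}_t)=0$ gives $\dim H^{0}(\mathfrak{X}_t,\mathfrak{L}_t^{m})=\chi(\mathcal{O}_{\mathfrak{X}_t})$ for all $m$, contradicting bigness. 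If you want to rescue your route you would need a genuine effectivity statement along the walls (for instance via Markman's analysis of which monodromy-reflective classes are stably prime exceptional), which is considerably harder than the paper's Zariski-decomposition argument and is not supplied by boundedness of MBM squares alone.
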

 \begin{proof}
  We derive a contradiction assuming that the closure of 
  $ \mathrm{Def}(X,L)\setminus \mathrm{Def}(X,L)_{\mathrm{mov}} $ coincides with
  $ \mathrm{Def}(X,L) $. For a point $ s \in \mathrm{Def}(X,L)\setminus \mathrm{Def}(X,L)_{\mathrm{mov}} $,
  we denote by $ \mathfrak{L}_{s} $ the restriction of the universal bundle $ \mathfrak{L} $ 
  to the fibre $ \mathfrak{X}_{s} $ at $ s $. We will prove that $ \mathfrak{L}_{s} $ is big.
  By Corollary \cite[Corollary 3.10]{MR1664696}, 
  the interior of the positive cone of an irreducible symplectic manifold is contained
  in the effective cone. By the assumption,
   $ L $ belongs to the closure of the positive cone of $ X $.
   Hence
  $ \mathfrak{L}_{s} $ also belongs to the closure of the positive cone of $ \mathfrak{X}_{s} $.
  Thus $ \mathfrak{L}_{s} $ is pseudo-effective. 
  By \cite[Theorem 3.1]{0907.5311},
  We obtain the $ q $-Zariski decomposition
  \[
  \mathfrak{L}_{s} = P_{s} + N_{s}
  \]
  By \cite[Theorem 3.1 (I) (iii)]{0907.5311}
  \[
 0 =  q_{\mathfrak{X}_{s}}(\mathfrak{L}_{s}) = q_{\mathfrak{X}_{s}}(P_{s}+N_{s}) = 
  q_{\mathfrak{X}_{s}}(P_{s}) + q_{\mathfrak{X}_{s}}(N_{s}).
  \]
  Since $ \mathfrak{L}_{s} $ does not belongs to the birational K\"ahler cone of $ \mathfrak{X}_{s} $,
  $ N \ne 0 $. This implies that $ q_{\mathfrak{X}_{s}}(P_{s}) > 0 $.
  We deduce $ P_{s} $  is big by \cite[Corollary 3.10]{MR1664696}.
  Hence $ \mathfrak{L}_{s} $ is also big.

  Let us consider the following function
  \[
  \mathrm{Def}(X,L) \ni s \mapsto h_{n}(s):= \dim H^{0}(\mathfrak{X}_{s},\mathfrak{L}_{s}^{n}) \in \mathbb{Z}.
  \]
  By the upper semicontinuity of $ h_{n}(s) $, there exists an open set $ W $ of $ \mathrm{Def}(X,L) $
  such that for every point $ s $ of $ W $, 
  \[
  h_{n}(t) \ge h_{n}(s)
  \]
  for all points $ t \in \mathrm{Def}(X,L) $. By the assumption
  that the closure of $ \mathrm{Def}(X,L)\setminus \mathrm{Def}(X,L)_{\mathrm{mov}} $
  coincides with $ \mathrm{Def}(X,L) $, 
  $ W \cap (\mathrm{Def}(X,L)\setminus \mathrm{Def}(X,L)_{\mathrm{mov}}) \ne \emptyset$.
  In the first half of the proof of this Lemma, we had proved that
  $ \mathfrak{L}_{s} $ is big for every point 
  $ s\in \mathrm{Def}(X,L)\setminus \mathrm{Def}(X,L)_{\mathrm{mov}}$.
  This implies that $ \mathfrak{L}_{t} $ is big for every point $ \mathrm{Def}(X,L) $. 
  Let $ t $ be a point of $ \mathrm{Def}(X,L) $ such 
  that $ \dim H^{1,1}(X,\mathbb{Q}) = 1 $. Then
   $ \mathfrak{L}_{t} $
  is nef  by Lemma \ref{nefness_of_non_projective}.
  Since $ \mathfrak{L}_{t} $ is nef and big, the higher cohomologies of $ \mathfrak{L}_{t} $ vanish
  By the Riemann-Roch formula in \cite[(1.11)]{MR1664696},
  we obtain
  \[
   \dim H^{0}(\mathfrak{X}_{t},\mathfrak{L}_{t}^{m}) =
   \sum_{j=0}^{\dim \mathfrak{X}_{t}/2}\frac{a_{j}}{2j}m^{2j}q_{\mathfrak{X}_{t}}(\mathfrak{L}_{t})^{j}
   = \chi (\mathcal{O}_{\mathfrak{X}_{t}}),
  \]
  because $ q_{\mathfrak{X}_{t}}(\mathfrak{L}_{t}) = q_{X}(L)= 0 $.
  That is a contradiction.
 \end{proof}
 We finish the proof of Corollary \ref{application}.
 If $ \Lambda\cap \mathrm{Def}(X,L)_{\mathrm{mov}} = \emptyset$,
 $ \mathrm{Def}(X,L) \setminus \mathrm{Def}(X,L)_{\mathrm{mov}} $ contains dense subsets
 of $ \mathrm{Def}(X,L) $.
 This contradicts Lemma \ref{density_of_movable}.
\end{proof}

\bibliographystyle{halpha}

\bibliography{semiample.bib}

\end{document}